\definecolor{Burgundy}{RGB}{144,0,32}
\newacro{DM}{decision making}
\newacro{AD}{automated driving}
\newacro{PDG}{powered descent guidance}
\newacro{ODE}{ordinary differential equation}
\newacro{RK4}{Runge-Kutta of order 4 integrator}
\newacro{SOS1}{special-ordered-sets of type 1}
\newacro{S-B-MIQP}{sequential Benders-based mixed-integer quadratic programming}
\newacro{NBB}{nonlinear branch-and-bound}
\newacro{GBD}{generalized Benders decomposition}
\newacro{KKT}{Karush-Kuhn-Tucker}
\newacro{DoF}{degrees of freedom}
\newacro{NLP}{nonlinear program}
\newacro{MPVC}{mathematical program with vanishing constraints}
\newacro{MPCC}{mathematical program with complementarity constraints}
\newacro{MPC}{nonlinear model predictive control}
\newacro{NMPC}{nonlinear model predictive control}
\newacro{MPPI}{model predictive path integral control}
\newacro{QP}{quadratic program}
\newacro{LP}{linear program}
\newacro{GDP}{generalized disjunctive programming}
\newacro{MIQP}{mixed-integer quadratic program}
\newacro{MILP}{mixed-integer linear program}
\newacro{MINLP}{mixed-integer nonlinear program}
\newacro{MI}{mixed-integer}
\newacro{MIP}{mixed-integer programming}
\newacro{BB}{branch-and-bound}
\newacro{SQP}{sequential quadratic programming}
\newacro{RNN}{recurrent neural network}
\newacro{OCP}{optimal control problem}
\newacro{LQR}{linear quadratic regulator}
\newacro{iLQR}{iterative linear quadratic regulator}
\newacro{SLQ}{sequential linear quadratic programming}
\newacro{DDP}{differential dynamic programming}
\newacro{MCP}{mixed complementarity problem}
\newacro{IP}{interior point}
\newacro{ADMM}{alternating direction method of multipliers }
\newacro{RTI}{real time iteration}
\newacro{MDP}{Markov decision process}
\newacro{SOC}{second order cone}
\newacro{SOCP}{second order cone problem}
\newacro{RL}{reinforcement learning}
\newacro{DP}{dynamic programming}
\newacro{LSTM}{long short-term memory}
\newacro{NN}{neural network}
\newacro{PPO}{proximal policy optimization}
\newacro{PG}{policy gradient}
\newacro{TD}{temporal difference}
\newacro{SAC}{soft actor critic}
\newacro{PI}{policy iteration}
\newacro{SARSA}{state action reward state action}
\newacro{IL}{imitation learning}
\newacro{MLE}{maximum likelihood}
\newacro{SV}{surrounding vehicle}
\newacro{EV}{ego vehicle}
\newacro{FF}{feed forward network}
\newacro{DNN}{deep neural network}
\newacro{UGV}{unmanned ground vehicle}
\newacro{UAV}{unmanned aerial vehicle}
\newacro{AC4MPC}{Actor Critic for Nonlinear Model Predictive Control}
\newcommand{\R}{\mathbb{R}}
\newcommand{\Z}{\mathbb{Z}}
\newcommand{\norm}[1]{\left\lVert #1 \right\rVert}
\newcommand{\indfn}{\sigma}
\newcommand{\approxindfn}{\tilde{\sigma}}
\DeclareMathSymbol{\shortminus}{\mathbin}{AMSa}{"39}
\begin{document}
\title{A Comparative Study of MINLP and MPVC Formulations for Solving Complex Nonlinear Decision-Making Problems in Aerospace Applications}

\author[1]{Andrea Ghezzi}
\author[1]{Armin Nurkanovi\'c}
\author[2]{Avishai Weiss}
\author[3]{Moritz Diehl}
\author[2]{Stefano Di Cairano}

\address[1]{\orgdiv{Department of Microsystems Engineering (IMTEK)}, \orgname{University of Freiburg}, \orgaddress{\country{Germany}}}
\address[2]{\orgdiv{Mitsubishi Electric Research Laboratories (MERL)}, \orgname{Cambridge}, \orgaddress{\state{MA}, \country{USA}}}
\address[3]{\orgdiv{Department of Microsystems Engineering (IMTEK) and Department of Mathematics}, \orgname{University of Freiburg}, \orgaddress{\country{Germany}}}
\corres{andrea.ghezzi@imtek.uni-freiburg.de}
\keywords{decision-making for dynamical systems, optimal control, mixed-integer nonlinear programming, mathematical programming with vanishing constraints}

\abstract[Abstract]{
    High-level decision-making for dynamical systems often involves performance and safety specifications that are activated or deactivated depending on conditions related to the system state and commands.
    Such decision-making problems can be naturally formulated as optimization problems where these conditional activations are regulated by discrete variables.
    However, solving these problems can be challenging numerically, even on powerful computing platforms, especially when the dynamics are nonlinear.
    In this work, we consider decision-making for nonlinear systems where certain constraints, as well as possible terms in the cost function, are activated or deactivated depending on the system state and commands.
    We show that these problems can be formulated either as mixed-integer nonlinear programs (MINLPs) or as mathematical programs with vanishing constraints (MPVCs), where the former formulation involves discrete decision variables, whereas the latter relies on continuous variables subject to structured nonconvex constraints.
    We discuss the different solution methods available for both formulations and demonstrate them on optimal trajectory planning problems in various aerospace applications.
    Finally, we compare the strengths and weaknesses of the MINLP and MPVC approaches through a focused case study on powered descent guidance with divert-feasible regions.
    In our simulations for problems up to medium size, MPVC formulations provide accurate solutions faster than MINLP formulations.
    However, for larger problems, the MPVC formulation introduces numerous nonconvexities that hinder solver convergence, even when they are relatively simple, making MINLPs the preferred choice in such cases.
    }

\maketitle

\section{Introduction}
We consider constraint-triggered optimization problems described using logical expressions
\begin{mini!}[2]
    {z \in \mathbb{R}^{n_z}}{f(z) - \sum_{i=1}^{n_\delta} w_i \indfn(H_i(z))}{\label{op: intro}}{\label{cf: extended nlp with logic implication}}
    \addConstraint{g(z) }{= 0, \; h(z) \leq 0\label{cns: intro normal}}
    \addConstraint{H_i(z)}{\geq 0 \Rightarrow G_i(z) \leq 0,}{\quad i \in \Z_{[1, n_\delta]},\label{cns: intro implication}}
\end{mini!}
where $\indfn$ is the Heaviside step-function $\indfn: \R \to \{0, 1\}$,
\begin{align}\label{eq: heaviside step function}
    \indfn(y) \coloneqq
    \begin{cases}
        1,  \quad &\text{if } y \geq 0, \\
        0, \quad &\text{if } y < 0.
    \end{cases}
\end{align}
In~\eqref{op: intro}, $w_i \geq 0$ are weight coefficients for the second term in the objective function, and functions $f: \R^{n_z} \to \R$, $g: \R^{n_z} \to \R^{n_g}$, $h: \R^{n_z} \to \R^{n_h}$, $H_i: \R^{n_z} \to \R$, $G_i: \R^{n_z} \to \R^{n_{G_i}}$ are assumed to be twice continuously differentiable.
Based on the chosen representation of the Heaviside step-function in the cost~\eqref{cf: extended nlp with logic implication}, and of the logical expression in the constraint~\eqref{cns: intro implication}, the mathematical program~\eqref{op: intro} can be formulated as a \ac{MINLP} or a \ac{MPVC}.
The convention $\sigma(0)=1$ makes~\eqref{eq: heaviside step function} an upper semi-continuous function, thus the minimization problem~\eqref{op: intro} is well-defined.
In some applications (cf. Sec.~\ref{subsec: problem descriptions}), we consider the left-hand-side of constraint~\eqref{cns: intro implication} to hold with strict inequality ``$H_i(z) > 0$''.
Strict inequalities define an open feasible set which might cause some issues in view of the existence of solutions of problem~\eqref{op: intro}.
However, the \ac{MINLP} and \ac{MPVC} formulations of~\eqref{op: intro} presented in Sec.~\ref{sec: formulations} will lead to well-defined problems.
The distinctive feature of~\eqref{op: intro} lies in the maximization of the Heaviside step-function within the cost.
By tuning the coefficients $w_i$, one trade-offs between finding optimal solutions to the \ac{NLP} with cost function $f$ and constraints~\eqref{cns: intro normal}, and the enforcement of additional indicator constraints expressed by functions $H_i$, that in turn imply constraints expressed by functions $G_i$.

A special case yet important case of~\eqref{op: intro} is when indicator constraints $H_i(z) \geq 0$ corresponds to indicator variables.
Consider the optimization variable $\tilde{z} = (z, \delta) \in \R^{n_z + n_\delta}$ such that $H_i(\tilde{z}) = \delta_i$ and $\delta_i \in [0, 1]$, for all $i \in \Z_{[1, n_\delta]}$, thus~\eqref{op: intro} can be written as
\begin{mini!}[2]
    {(z, \delta) \in \R^{n_z + n_\delta}}{f(z) - \sum_{i=1}^{n_\delta} w_i \delta_i}{\label{op: intro special case}}{\label{cf: extended nlp with logic implication special case}}
    \addConstraint{g(z) }{= 0, \; h(z) \leq 0\label{cns: intro normal special case}}
    \addConstraint{\delta_i}{> 0 \Rightarrow G_i(z) \leq 0,}{\quad i \in \Z_{[1, n_\delta]},\label{cns: intro implication special case}}
    \addConstraint{0 \leq \delta_i}{\leq 1,}{\quad i \in \Z_{[1, n_\delta]},\label{cns: intro bounds on delta special case}}
\end{mini!}
Problem~\eqref{op: intro special case} does not contain the Heaviside step-function from~\eqref{cf: extended nlp with logic implication special case}, making~\eqref{op: intro special case} easier to treat than~\eqref{op: intro}.
A similar remark to one above can be made for the strict inequality in \eqref{cns: intro implication special case}.
Additionally, we will show that optimal solution of both MINLP and MPVC reformulation of~\eqref{op: intro special case} are characterized by $\delta_i \in \{0, 1\},$ for all $i \in \Z_{[1, n_\delta]}$.

In this work, we introduce optimal trajectory planning problems for aerospace applications that can be modeled by~\eqref{op: intro} and~\eqref{op: intro special case}, and demonstrate how the resulting optimization problems are solved.

\textbf{Related work --}
Mathematical programs with indicator constraints, or ``on/off'' constraints, such as~\eqref{cns: intro implication} have been thoroughly studied in the operation research literature~\cite{Hijazi2012, Bonami2015, Belotti2016}, and it is well-known that logical implications can be represented either via disjunctive programming~\cite{Balas1979,Grossmann2002} or complementarity and vanishing constraints~\cite{Scholtes2004,Achtziger2008}.
Among the disjunctive programming approaches, the most common and easiest to implement is the \textit{big-M} method~\cite{Wolsey1999}.
However, the \textit{big-M} method leads to weak relaxations~\cite{Bonami2015}.
Other approaches for representing disjunctions are based on perspective formulations~\cite{Ceria1999,Grossmann2013} and lead to tailored algorithms such as branch-and-cut~\cite{Stubbs1999, Frangioni2006}.
Nevertheless, approaches based on the perspective formulation require constructing the convex hull of the disjunction, which can be computed analytically only for specific sets~\cite{Gunluk2011, Hijazi2012, Bonami2015}.
If the analytical expression is not available, the convex hull might be constructed iteratively via cutting planes as in~\cite{Stubbs1999, Frangioni2006}, thus the resulting cutting planes have to be integrated in a branch-and-cut algorithm.
The iterative procedure to obtain cutting planes for the convex hull might require similar computation time to that of the big-M formulation.
The approach of using complementarity constraints for representing logical implications requires the introduction of structured nonconvexities that violate standard constraint qualifications, such that even robust \ac{NLP} solvers might fail to converge to a solution~\cite{Fletcher2006, Hoheisel2008},~\cite[\S 9.3]{Kirches2011}.
In practice, the use of complementarity constraints is advised for problems that possess a connected feasible set.
Despite the numerical issues related to the solution of problems with complementarity constraints, a major advantage is that they require the solution of \acp{NLP} only, thus there is no reliance on \ac{MIP} solvers.

In the context of this paper,~\eqref{op: intro} arises from the time-discretization of an \ac{OCP} via a direct method, e.g., direct multiple shooting~\cite{Bock1984} or direct collocation~\cite{Tsang1975}.
This type of optimization problem aims at finding the control inputs to a dynamic system that produce the best possible performance over a specified time horizon, subject to constraints on the system state and controls.
Similar to this work, in~\cite{Jung2013} the authors present a \ac{MINLP} and a \ac{MPVC} formulation of a mixed-integer \ac{OCP} whose discrete part is not limited to the representation of logical expressions but also to system states and controls.
The focus of~\cite{Jung2013} relies in the comparison of the relaxations obtained from the two alternative formulations.
In this work, we focus on \acp{OCP} that are continuous except for the indicator variables, and we aim at comparing computational methods and quality of the final solutions for both \ac{MINLP} and \ac{MPVC} formulations.
In Section~\ref{sec: trajectory planning problems}, we specify~\eqref{op: intro} as an \ac{OCP} to provide a reference formulation.

The main case study in this paper is the powered descent phase in Mars landing~\cite[p. 87]{Malyuta2022}.
In \ac{PDG}, optimal control and numerical optimization have been largely adopted for computing time- and fuel-optimal trajectories~\cite{Acikmese2007}.
The method proposed in~\cite{Acikmese2007} formulates \ac{PDG} as a convex problem that can be solved in milliseconds to global optimality~\cite{Acikmese2008}, and the formulation has been further extended to handle non-convex control constraints~\cite{Acikmese2011}, linear and quadratic state constraints~\cite{Harris2014, Harris2013}.
For more references on \ac{PDG} we refer the reader to the tutorial article~\cite{Malyuta2022}.
The convexification adopts a 3 \ac{DoF} point mass model, which is generally considered accurate enough for this application.
Despite the numerous enhancements to the convex reformulation, at the current status it is not possible to obtain a ``lossless convexification'' in the case of continuous activation of constraints~\cite{Malyuta2022}, i.e., if an optimal solution has active path constraints in consecutive time steps.
Also, if the objective of the problem is modified, such that it no longer exclusively seeks fuel- and time-optimal trajectories, a new tailored convexification must be developed which is not guaranteed to exist.
In this work, we address a \ac{PDG} problem augmented with divert-feasible regions.
The spacecraft must approach the landing site while traversing the maximum number of these regions, areas from which alternative landing sites are reachable, thereby maximizing the ``divert possibility''.
Such problem can be formulated as~\eqref{op: intro} which, to the authors' knowledge, cannot be losslessly convexified, and thus must be addressed as is, either as a \ac{MINLP} or a \ac{MPVC}.

\textbf{Contribution --}
The main contribution of this paper is a discussion on how to solve complex nonlinear decision-making problems, within acceptable computation times, and using existing software packages.
We present relevant case studies in the aerospace domain that can be modeled as~\eqref{op: intro}, and we draw a continuous parallel between their formulation as \acp{MINLP} and \acp{MPVC}, highlighting the differences and peculiarities of each approach.
The more comprehensive case study is the \ac{PDG} with divert-feasible regions for Mars landing, a problem that is currently the subject of active research due to the renewed interest in Mars exploration, and for which we propose a formulation enabled by nonlinear modeling.
Through extensive numerical simulations, we demonstrate the efficacy of these formulations and provide insights on when a \ac{MINLP} or a \ac{MPVC} formulation is more suitable.

\textbf{Outline --}
In Section~\ref{sec: trajectory planning problems}, we specialize formulation~\eqref{op: intro} for \acp{OCP}, and introduce relevant aerospace case studies that can be modeled as~\eqref{op: intro} and~\eqref{op: intro special case}.
Section~\ref{sec: formulations} shows how to obtain a numerically tractable expression for the logical implication~\eqref{cns: intro implication} and for the Heaviside-step function \eqref{eq: heaviside step function}.
Moreover, we present two alternative formulations of both~\eqref{op: intro} and~\eqref{op: intro special case}: one resulting in a \ac{MINLP} and one in a \ac{MPVC}.
In Section~\ref{sec: solution methods}, we survey some solution methods for \acp{MINLP} and \acp{MPVC}, and compare them for a tutorial example.
Section~\ref{sec: landing results} describes a case study of the \ac{PDG} problem constrained with divert-feasible regions, obtains both the \ac{MINLP} and \ac{MPVC} formulations, and solves them with the presented solution methods.
Finally, Section~\ref{sec: conclusions} contains concluding remarks and possible future research.

\textbf{Notation --}
The set set of positive real numbers is denoted as $\R_+$.
With ~$\Z_{[a, b]}$, $a < b$, we denote the interval of integers $\{a, a+1, \dots, b\}$.
Vector inequalities are intended component-wise.
The symbol $\Rightarrow$ denotes the logical implication, $\indfn$ the Heaviside step-function, and $\norm{\cdot}_2$ the vector Euclidean norm.
Given two variables $a, b$, we write a complementarity condition using the symbol $\perp$ as $0 \leq a \perp b \geq 0$, i.e., it must hold either $a \geq 0, b=0$, or $a=0, b\geq 0$.

\section{Decision-making via optimal control}\label{sec: trajectory planning problems}
In this section, we present trajectory planning case studies relevant in the aerospace domain that can be formulated as~\eqref{op: intro}.
Each trajectory planning task is formulated as an \acf{OCP} that is solved via a direct method.
First, we introduce how to specialize the generic formulation~\eqref{op: intro} to an \ac{OCP} formulation.
Then, we present the trajectory planning problems.

\subsection{Discrete-time optimal control problem}\label{subsec: discrete-time OCP}
We consider dynamical systems modeled via \acp{ODE}
\begin{equation*}
    \dot{x}(t) = f(x(t), u(t)),
\end{equation*}
where $t$ denotes the time, $x \in \R^{n_x}$ and $u \in \R^{n_u}$ the system state and control vectors, respectively.
To optimally control the system, we formulate an \ac{OCP} over a time interval $t \in [0, t_\mathrm{f}]$ that encodes performance goals via the cost function, and safety specifications via constraints.
In order to numerically solve such \ac{OCP}, we adopt direct multiple shooting~\cite{Bock1984}.
By means of a suitable integration method, e.g., a Runge-Kutta integrator, we discretize the \acp{ODE}, the cost function, and the constraints over a uniform grid with $N$ intervals where $0 = t_0 < t_1 < ... < t_N = t_\mathrm{f}$.
The discretization step is denoted by $t_\mathrm{d}$ and is defined as $t_\mathrm{d} \coloneqq t_\mathrm{f} / N$.
The final time $t_\mathrm{f}$ may be included as optimization variable to formulate time-optimal problems.
After the discretization step, we obtain a mathematical program as~\eqref{op: intro} but with a specific \ac{OCP} structure,
\begin{mini!}[2]
    {t_\mathrm{f}, \mathbf{x},\mathbf{u}}{E(x_N, t_\mathrm{f}) + \sum_{k=0}^{N-1} \left(L(x_k, u_k, \tfrac{t_\mathrm{f}}{N}) - \frac{t_\mathrm{f}}{N}\sum_{i=1}^{n_\delta} w_{i, k} \indfn(r_i(x_k, u_k)) \right)}{\label{op: OCP-NLP template}}{\label{cfn: cost function OCP-NLP template}}
    \addConstraint{x_0}{= \bar{x}_0}
    \addConstraint{x_{k+1}}{= F(x_k, u_k, \tfrac{t_\mathrm{f}}{N}),}{\quad k \in \Z_{[0, N-1]}}
    \addConstraint{c(x_k, u_k)}{\leq 0,}{\quad k \in \Z_{[0, N-1]}}
    \addConstraint{r_i(x_k, u_k) \geq 0}{\Rightarrow s_i(x_k, u_k) \leq 0,}{\quad i \in \Z_{[1, n_\delta]}, k \in \Z_{[0, N-1]}\label{cns: indicator variable ocp-template}}
    \addConstraint{c_N(x_N)}{\leq 0,}
\end{mini!}
where $\mathbf{x} \coloneqq (x_0, \dots, x_N)$ and $\mathbf{u} \coloneqq (u_0, \dots, u_{N-1})$ are the stage-wise concatenation of the state and control variables, respectively.
For the simplicity of notation, we do not consider implication constraints involving the terminal state, but the extension to handle this case is straightforward.
The cost function~\eqref{cfn: cost function OCP-NLP template} includes a terminal cost $E: \R^{n_x} \times \R \to \R$ and a stage cost with two distinct terms.
The first term $L: \R^{n_x} \times \R^{n_u} \times \R \to \R$ depends only on state, control, and discretization step.
The second term is a weighted sum of the Heaviside step-functions.
Function $F: \R^{n_x} \times \R^{n_u} \to \R^{n_x}$ is obtained from a suitable integration method that discretizes the corresponding \ac{ODE}, e.g., a Runge-Kutta integrator.
Function $c: \R^{n_x} \times \R^{n_u} \to \R^{n_c}$ defines nonlinear path constraints, enforced at grid nodes, and $c_N: \R^{n_x} \to \R^{n_{c_N}}$ expresses a terminal constraint.
Functions $s_i: \R^{n_x} \times \R^{n_u} \to \R^{n_s}$ construct constraints that are enforced only when the corresponding triggering function $r_i: \R^{n_x} \times \R^{n_u} \to \R$ is satisfied.
Similarly to~\eqref{op: intro special case}, it is straightforward to obtain a special case for~\eqref{op: OCP-NLP template} when the indicator constraints $r_i(x_k, u_k) \geq  0$ corresponds to indicator variables $\delta_i \in [0, 1]$.
We consider formulation~\eqref{op: OCP-NLP template} as a template for the problems presented in the following subsections.

\subsection{Aerospace trajectory planning case studies}\label{subsec: problem descriptions}
Next, we describe four trajectory planning problems.
The first two correspond to the special case of~\eqref{op: OCP-NLP template}, as they lack indicator constraints defined by functions $r_i$ and instead use indicator variables $\delta_i \in [0, 1]$.

\subsubsection{Powered descent guidance with divert-feasible trajectories}
We aim at computing landing trajectories for a spacecraft that in addition to minimizing flight time and fuel usage, approach the primary landing site by traversing for as much time as possible divert-feasible regions, i.e., regions from which the spacecraft can divert to alternative landing sites.
These divert-feasible regions may be represented by polytopes obtained by reachability analysis~\cite{Srinivas2024}.
We define $p_k \in \R^3, v_k \in \R^3$, where $k\in \Z_{[0, N-1]}$, as the 3D position and velocity of the spacecraft in an inertial reference frame with origin in the primary landing site.
Additionally, we define $\xi \coloneqq (p, v)$ as the concatenation of position and velocity.
At each point of the time grid, $t_k$, with $k \in \Z_{[0, N-1]}$,  we consider $n_\mathrm{p}$ polytopes in the half-space representation: $A_{k, i} \xi_k + b_{k, i} \leq 0, \, i\in \Z_{[1, n_\mathrm{p}]}$, where $A_{k, i} \in \R^{n_i \times 6}$, $b_{k, i} \in \R^{n_i}$, and $n_i$ is the number of halfspaces in the $i$-th polytope.
We introduce one indicator variable $\delta_{k, i} \in [0, 1]$ to model the membership of the state $\xi_k$ to the $i$-th polytope at the $k$-th time instant,
\begin{equation}
    \delta_{k, i} > 0\Rightarrow A_{k, i} \xi_k + b_{k, i} \leq 0, \quad k\in \Z_{[0, N-1]}, i\in \Z_{[1, n_\mathrm{p}]}.
\end{equation}

Finally, to obtain trajectories that traverse for the longest possible time multiple divert-feasible regions, we specify the second term of the stage cost in~\eqref{cfn: cost function OCP-NLP template} as
\begin{equation}\label{eq: stage cost to max deltas}
    \frac{t_\mathrm{f}}{N}\sum_{i=1}^{n_\mathrm{p}} w_{k, i}\delta_{k, i}, \quad k \in \Z_{[0, N-1]},
\end{equation}
where $w_{k, i} \geq 0$ are weighting coefficients.
In Section~\ref{sec: landing results}, we describe in detail the \ac{PDG} constrained with divert-feasible regions and present numerical simulations to compare the MINLP approach against the MPVC approach.

\subsubsection{Coordination of unmanned ground and aerial vehicles}
We consider the coordination problem between an \ac{UGV} and several \acp{UAV} described in~\cite{Kim2024}.
A set of monitoring targets must be visited by \acp{UAV}.
These are carried by a UGV which serves as a mobile docking site for recharging.
We aim to plan a trajectory for the UGV such that the \acp{UAV} can visit all the targets.

For each monitoring target it is possible to construct a reachable set for the \acp{UAV}.
The set expresses the region of space where the \ac{UAV} can leave the UGV, accomplish the monitoring task, and land again on the UGV with a prescribed minimum state of charge of the battery.
We consider inner convexifications of the reachable sets.
These convexifications may be more conservative but are easier to handle in optimization problems.
Therefore, it is possible to plan the trajectory of the UGV from a start to an end point that encompasses each reachable set at least once.
Also, the trajectory has to maximize the time spent by the UGV in the reachable sets, and further constraints can be imposed, e.g., actuation limits, obstacle avoidance.

We denote the position of the UGV by $p_k \in \R^2, k\in \Z_{[0, N-1]}$, and we consider $n_\mathrm{p}$ reachable sets, one for each target, described by polytopes as $A_i p_k + b_i \leq 0, i = \Z_{[1, n_\mathrm{p}]}$.
Differently from the landing problem, we assume the reachable set to be independent from time.
We model the membership of the UGV position in the reachable sets of the targets by introducing an indicator variable $\delta_{k, i} \in [0, 1]$,
\begin{equation}\label{eq: ugv logic cns generic 1}
    \delta_{k, i} > 0\Rightarrow A_{i} p_k + b_{i} \leq 0, \quad k\in \Z_{[0, N-1]}, i \in \Z_{[1, n_\mathrm{p}]}.
\end{equation}
To ensure that each reachable set is visited at least once along the UGV's trajectory we enforce
\begin{equation}\label{eq: ugv logic cns generic 2}
    \sum_{k=0}^{N-1} \delta_{k, i} \geq 1, \quad i \in \Z_{[1, n_\mathrm{p}]}.
\end{equation}
To maximize the time spent by the UGV in the reachable sets, the second term of the stage cost in~\eqref{op: OCP-NLP template} is identical to~\eqref{eq: stage cost to max deltas}.
Through functions $c$ and $c_N$ we can impose additional constraints, and through functions $L$ and $E$ additional stage and terminal objectives, respectively.

\subsubsection{Soft docking}
We consider a spacecraft docking where the goal is to compute a fuel- and time-optimal trajectory that successfully docks a chaser spacecraft to a target spacecraft.
Specifically, we consider the formulation presented in~\cite{Malyuta2020} where we substitute the so-called silent thruster constraint with a general soft-docking constraint~\cite{DiCairano2012}.
The latter ensures a reduction of the maximum velocity of the chaser spacecraft as the target spacecraft is approached.
The other important constraint in spacecraft docking is the line-of-sight constraint, which enforces a specific range for the approach angle of the spacecraft with respect to the docking port to ensure proper sensing.
To avoid over-conservative trajectories the two docking-specific constraints, i.e., soft-docking and line-of-sight, are imposed only in the proximity of the docking site.

We denote by $p_k \in \R^3, v_k \in \R^3, k\in \Z_{[0, N-1]}$ the 3D position and velocity of the spacecraft in a global inertial reference frame.
The soft-docking constraint is formulated as
\begin{equation}\label{cns: soft docking}
    \norm{p_k - p_\mathrm{f}}_2 \leq r \Rightarrow \norm{v_k}_2 \leq \alpha \norm{p_k - p_\mathrm{f}}_2, \quad k\in \Z_{[0, N-1]},
\end{equation}
where $p_\mathrm{f}$ is the position of the docking site, $r \in \R_+$ is a prescribed distance from the docking site, and $\alpha \in \R_+$ is a parameter for the relationship between the maximum velocity and the distance from the docking site.
Similarly the line-of-sight constraint is
\begin{equation}\label{cns: plume impingement}
    \norm{p_k - p_\mathrm{f}}_2 \leq r \Rightarrow \norm{p_k - p_\mathrm{f}}\cos(\theta_\mathrm{max}) \leq (p_k - p_\mathrm{f})^\top e_\mathrm{f}, \quad k\in \Z_{[0, N-1]},
\end{equation}
where $\theta_\mathrm{max}\in(0, \tfrac{\pi}{2})$ is the approach-cone half angle and $e_\mathrm{f} \in \R^3$ is the docking site axis orientation.

For this application the cost term involving indicator variables in~\eqref{cfn: cost function OCP-NLP template} is omitted, since these are not performance objectives but rather safety specifications.

In the aerospace engineering literature, constraints~\eqref{cns: soft docking},~\eqref{cns: plume impingement} are often called ``state-triggered constraints''~\cite{Szmuk2020}, as the constraints are only enforced if the current system satisfies a specific ``trigger'' condition which depends on the system state.
However, in~\cite{Szmuk2020} the constraints on the right side of the logical implications hold with equality.
We remark that this problem could also be modelled as a multi-phase \ac{OCP} by choosing a priori when the spacecraft has to be in the proximity of the docking site.
The transition can be imposed via equality constraints on the system state at a specific time instant.
Hence, we obtain an \ac{OCP} without logical expressions which results in a standard \ac{NLP} after time discretization.
But, the multi-phase \ac{OCP} has reduced flexibility, and thus, in general, lower performance.

\subsubsection{Abort-safe spacecraft rendezvous}
By following NASA's convention for a rendezvous of a spacecraft (``deputy'') with the International Space Station (``chief''), we consider two rendezvous phases~\cite{Fehse2003}.
In the first phase the deputy has to be \textit{passively} safe, meaning that in case of a complete loss of all thrusters, the deputy will not collide with the chief.
In the second phase, once the deputy is close enough to the chief, often it is not possible to ensure the existence of \textit{passively} safe trajectories.
Thus, we aim at computing deputy trajectories that are \textit{actively} safe, i.e., even in case of partial thrust failure they can avoid collision with the chief by actuating the remaining thrusters.
The maximum number of thrust failures allowed is fixed when the second phase starts.
To determine which regions of the space are abort-safe it is possible to construct backward reachable sets~\cite{Aguilar2022}.
The sets are constructed in different ways for both passive and active safety, but here we assume for simplicity that they are given and represented by polytopes.
As usual, the planned trajectory of the deputy, in addition to being abort-safe, should also be fuel- and time-optimal.

We model the trajectory planning for a safe rendezvous as follows.
First, we introduce a logical implication to determine whether the deputy should be passively or actively safe based on the distance from the chief.
For passive safety, the deputy trajectory in each time step must lie in one of the passive reachable sets.
Conversely, for active safety, the deputy trajectory must lie in the active reachable set corresponding to the prescribed level of acceptable thrust failure.
In addition, it might be possible to enhance robustness against failures by promoting trajectories that, when possible, lie also in other active reachable sets corresponding to higher levels of thrust failures.

Mathematically, we state the above as follows.
First, we define by $\xi_k \coloneqq (p_k, v_k), k \in \Z_{[0, N-1]}$ the concatenation of position and velocity of the deputy, respectively, and by $\mathcal{P}_k, i \in \Z_{[1, n_\mathrm{p}]}$ the reachable sets corresponding to passively safe regions.
For detecting and enforcing passive safety we impose
\begin{equation}\label{cns: passive safety rendezvous}
    \norm{p_k - p_\mathrm{c}}_2 > r \Rightarrow \xi_k \in \mathcal{P}_k, \quad k\in \Z_{[0, N-1]},
\end{equation}
where $p_\mathrm{c} \in \R^3$ is the position of chief, and $r$ is the distance that divides the region requiring passive safety from the one requiring active safety.
Conversely, to detect and enforce active safety we impose
\begin{equation}\label{cns: active safety rendezvous}
    \norm{p_k - p_\mathrm{c}}_2 \leq r \Rightarrow \xi_k \in \mathcal{A}_k, \quad k\in \Z_{[0, N-1]},
\end{equation}
where $\mathcal{A}_k$ corresponds to the active-safe set that must be satisfied.
Often, $\mathcal{A}_k$ is formed by multiple regions as $\mathcal{A}_k = \bigcup_{i=1}^{n_\mathrm{a}} \hat{\mathcal{A}}_{k, i}$, hence~\eqref{cns: active safety rendezvous} can be further specified as
\begin{equation}
    \norm{p_k - p_\mathrm{c}}_2 \leq r \Rightarrow \xi_k \in \hat{\mathcal{A}}_k, \delta_{k, i} \geq 0, \quad k\in \Z_{[0, N-1]}, i \in \Z_{[1, n_\mathrm{a}]},
\end{equation}
where $\delta_{k, i} \in [0, 1]$ are indicator variables that denote the membership in the active-safe sets $\hat{\mathcal{A}}_{k, i}, i \in \Z_{[1, n_\mathrm{a}]}$.
Specifically, the indicator variables enforce the constraints
\begin{equation}
    \delta_{k, i} > 0\Rightarrow \xi_k \in \hat{\mathcal{A}}_{k, i}, \quad k \in \Z_{[0, N-1]}, i \in\Z_{[1, n_\mathrm{a}]},
\end{equation}
and we guarantee active safety by requiring
\begin{equation}
    \sum_{i=1}^{n_\mathrm{a}} \delta_{i, k} \geq 1, \quad k \in \Z_{[0, N-1]}.
\end{equation}
Trajectories that traverse the intersection of multiple active sets $\hat{\mathcal{A}}_{k, i}$ are considered safer as intersections may represent reachable sets for higher levels of thrust failures.
In order to encourage the optimizer to find such trajectories, we express the second term of the objective in~\eqref{op: intro} as
\begin{equation}
    \frac{t_\mathrm{f}}{N}\sum_{i=1}^{n_\mathrm{a}} w_{k, i} \delta_{k, i}, \quad k \in \Z_{[0, N-1]},
\end{equation}
where $w_{k, i} \geq 0$ are weighting coefficients.
Thus, this problem combines logical constraints regulated by indicator variables and indicator constraints.

\section{Obtaining computationally tractable logical expressions}\label{sec: formulations}
In this section, we demonstrate how to translate the logical implications contained in~\eqref{op: intro} into an optimization problem suitable for numerical solvers.
We start by presenting formulations of~\eqref{op: intro special case} where the constraints involving the logical implications~\eqref{cns: intro implication} are governed solely by indicator variables.
The general formulation~\eqref{op: intro} is addressed at the end of this section since the presence of indicator constraints and the Heaviside step-function in the cost require the introduction of approximations.

There are numerous strategies to handle logical expressions in the literature, here we consider two.
The first one relies on the \ac{GDP} framework introduced by Balas~\cite{Balas1979} for \acp{MILP}, and extended by Grossmann and coworkers to \acp{MINLP}~\cite{Grossmann2002, Grossmann2012}.
In \ac{GDP} logical expressions can be stated via the so-called ``big-M'' constraints or via perspective functions based on the convex hull formulation.
In this work, we employ big-M constraints since their use is straightforward and avoids the introduction of copies of variables, differently from convex hull formulations~\cite{Vielma2015}.
This is particularly important as we consider problems with a large number of decision variables and aim to solve them quickly.
A potential drawback of big-M formulations is the assumption that the problem's constraints have an upper (or lower) bound.
However, this issue is generally not significant for control problems, where variables and constraints are naturally bounded by physical limits.
A second disadvantage of big-M formulations is the low quality of their relaxations, which often result in excessively optimistic lower bounds.
Loose relaxations can directly increase solver time.
However, \acp{OCP} typically admit tight bounds on states and controls, which also lead to tight bounds on other constraints, making big-M formulations more attractive.
Furthermore, when state-of-the-art \ac{MIP} solvers are used, the presolve routines can preprocess the given formulation yielding tighter relaxations~\cite{Achterberg2020}.

Let functions $G_i$ in~\eqref{cns: intro implication special case} admit an upper bound $M \in \R_+$ such that, for every admissible value $z$, $G_i(z) \leq M$, $i\in \Z_{[1, n_\delta]}$.
Then,~\eqref{cns: intro implication special case} is reformulated as
\begin{equation}
    G_i(z) \leq M (1 - \delta_i), \quad i\in \Z_{[1, n_\delta]}.
\end{equation}
When $\delta_i = 1$, the constraint $G_i(z) \leq 0$ is enforced.
When $\delta_i = 0$, the constraint becomes $G_i(z) \leq M$, which is trivially satisfied by the definition of $M$.
Problem~\eqref{op: intro special case} can be formulated as the \ac{MINLP}
\begin{mini!}[2]
    {z \in \mathbb{R}^{n_z}, \delta \in \{0, 1\}^{n_\delta}}{f(z) - \sum_{i=1}^{n_\delta}w_i\delta_i}{\label{op: generic minlp}}{}
    \addConstraint{g(z)}{=0, \; h(z) \leq 0}
    \addConstraint{G_i(z)}{\leq M(1 - \delta_i),}{\quad i \in \Z_{[1, n_\delta]}.\label{cns: generic bigM constraint}}
\end{mini!}
General \acp{MINLP} are NP-hard problem, even undecidable if the respective problem is unbounded~\cite{Kannan1978}.
For the special case of \textit{convex} MINLPs, where relaxing the integer variables results in a convex \ac{NLP},  existing solvers can often compute the global optimum reasonably fast, if a feasible solution exists.
However, the complexity of \textit{convex} \acp{MINLP} remains NP-hard.
In Section~\ref{sec: solution methods}, we present an overview of methods for solving \acp{MINLP} with a special focus on a recently proposed method~\cite{Ghezzi2024}, which has demonstrated to work directly and effectively on nonconvex \acp{MINLP} when other solvers fail.

The second strategy we consider for modeling the logical expression is by introducing complementarity or vanishing constraints~\cite{Scholtes2004,Achtziger2008}.
The logical implication considered in~\eqref{cns: intro implication special case} only requires vanishing constraints~\cite{Achtziger2008}.
Specifically, the implication is substituted by the nonconvex constraints
\begin{align}\label{eq: vanishing constraints formulation}
        \delta_i G_i(z) \leq 0, \; \delta_i \in [0,1], \quad i \in \Z_{[1, n_\delta]}.
\end{align}
When $\delta_i = 0$, the constraint is trivially satisfied, but when $\delta_i > 0$ it must hold that $G_i(z) \leq 0$.
By means of~\eqref{eq: vanishing constraints formulation}, we can formulate~\eqref{op: intro special case} as the \ac{MPVC}
\begin{mini!}[2]
    {z \in \mathbb{R}^{n_z}, \delta \in [0, 1]^{n_\delta}}{f(z) - \sum_{i=1}^{n_\delta}w_i \delta_i}{\label{op: generic mpvc}}{}
    \addConstraint{g(z)}{=0, \; h(z) \leq 0}
    \addConstraint{\delta_i G_i(z)}{\leq 0,}{\; i \in \Z_{[1, n_\delta]}. \label{cns: generic vanishing constraint}}
\end{mini!}
Even though each constituent function is twice continuously differentiable, possibly convex, the vanishing constraint~\eqref{cns: generic vanishing constraint} renders~\eqref{op: generic mpvc} nonconvex and nonsmooth.
The nonsmoothness might be mitigated by using complementarity functions, denoted as ``C-functions'' or ``NCP-functions'' (cf.~\cite{Nurkanovic2023} for an overview).
However, \acp{MPVC} are hard to solve even for powerful Newton-based \ac{NLP} solvers because they violate constraint qualifications~\cite{Achtziger2008}.
In Section~\ref{sec: solution methods}, we outline an effective numerical method to solve \acp{MPVC} based on relaxation and homotopy.

\subsection{Comparison of the reformulations}
\begin{figure}
    \centering
    \includegraphics{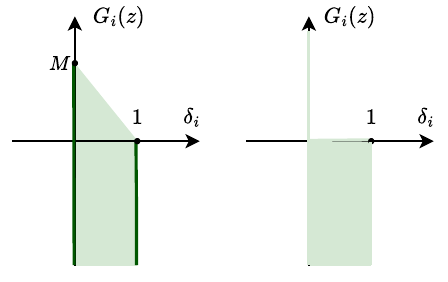}
    \caption{Left: in dark green the feasible set of constraint~\eqref{cns: generic bigM constraint} with $\delta_i \in \{0, 1\}$, and in light green its relaxation. i.e., $\delta_i \in [0, 1]$. Right: in light green the feasible set of constraint~\eqref{cns: generic vanishing constraint}.}
    \label{fig: feasible set simple minlp and mpvc}
\end{figure}

The two constraint reformulations that we have described lead to different feasible sets for the corresponding relaxations.
Figure~\ref{fig: feasible set simple minlp and mpvc} depicts the feasible set of constraints~\eqref{cns: generic bigM constraint} and~\eqref{cns: generic vanishing constraint}, respectively.
First, consider the feasible set expressed by~\eqref{cns: generic bigM constraint} for the MINLP.
In Figure~\ref{fig: feasible set simple minlp and mpvc}, we see that the feasible set is characterized by two disjoint lines, and an ambiguity in the value of the indicator variable might arise when $G_i(z)=0$.
In this case, both the alternatives
\begin{equation}
    G_i(z) \leq M, \; \delta_i=0, \qquad G_i(z) \leq 0, \; \delta_i=1,
\end{equation}
are feasible for $G_i(z)=0$.
The ambiguity is resolved by the cost function, since an optimal solution is characterized by $\delta_i=1$ (cf. Lemma~\ref{lemma: characterization optimal solution}).

A similar reasoning can be applied to the MPVC reformulation which has the connected feasible set shown in Figure~\ref{fig: feasible set simple minlp and mpvc}.
When $G_i(z)\leq 0$, the minimization of the cost causes $\delta_i=1$, resolving the possible ambiguity.
Following this intuition we can formally state this property regarding the optimal solutions of~\eqref{op: generic mpvc}.
\begin{lemma}\label{lemma: characterization optimal solution}
    Let $(z^\star, \delta^\star)$ be a locally optimal solution of the relaxed MINLP~\eqref{op: generic minlp}, i.e., with $\delta \in [0, 1]^{n_\delta}$, and let $\mathcal{I} \subseteq \Z_{[1, n_\delta]}$ be the set of indices such that $G_i(z^\star) \leq 0$ for all $i \in \mathcal{I}$, then $\delta_i^\star = 1$ for all $i \in \mathcal{I}$.
\end{lemma}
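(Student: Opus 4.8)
The plan is to argue by a feasibility-preserving perturbation of a single indicator variable, exploiting the sign of $w_i$ in the objective. The starting observation is that for any index $i \in \mathcal{I}$ the big-M constraint \eqref{cns: generic bigM constraint} is slack at $z^\star$: since $G_i(z^\star) \leq 0$ and $M > 0$, the constraint $G_i(z^\star) \leq M(1 - \delta_i)$ is equivalent to $\delta_i \leq 1 - G_i(z^\star)/M$, whose right-hand side is at least $1$. Hence, for index $i$, the only active upper bound on $\delta_i$ is the box constraint $\delta_i \leq 1$ inherited from the relaxation $\delta \in [0,1]^{n_\delta}$, which leaves room to increase $\delta_i^\star$ whenever it is strictly below $1$.

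First I would fix $i \in \mathcal{I}$ and suppose, for contradiction, that $\delta_i^\star < 1$. I would then consider the perturbed point that keeps $z^\star$ and all $\delta_j^\star$, $j \neq i$, unchanged while raising the $i$-th indicator to $\delta_i^\star + \varepsilon$ for a small $\varepsilon \in (0,\, 1 - \delta_i^\star]$. The next step is to check that this perturbation remains feasible: the constraints $g(z) = 0$ and $h(z) \leq 0$ do not involve $\delta$ and are untouched; the big-M constraints for indices $j \neq i$ do not involve $\delta_i$ and are untouched; and the $i$-th big-M constraint becomes $G_i(z^\star) \leq M(1 - \delta_i^\star - \varepsilon)$, which holds for every $\varepsilon \leq 1 - \delta_i^\star$ since its right-hand side stays nonnegative while $G_i(z^\star) \leq 0$. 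The box bound $0 \leq \delta_i^\star + \varepsilon \leq 1$ holds by the choice of $\varepsilon$.

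Finally I would compare objective values. The perturbation changes the cost by exactly $-w_i \varepsilon$, so whenever $w_i > 0$ the perturbed feasible point has strictly smaller cost and lies in every neighborhood of $(z^\star, \delta^\star)$, contradicting local optimality; this forces $\delta_i^\star = 1$, and the argument applies to each $i \in \mathcal{I}$ independently. The delicate point, which I expect to be the only obstacle, is the degenerate weight $w_i = 0$: there the objective is indifferent to $\delta_i$, so a given local optimum need not have $\delta_i^\star = 1$, and the conclusion holds only in the sense that one may select a locally optimal solution with $\delta_i^\star = 1$ without loss of generality. I would therefore present the nontrivial content under the natural assumption $w_i > 0$, consistent with the role of $w_i$ as a strictly positive reward for traversing the divert-feasible regions.
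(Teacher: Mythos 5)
Your proof is correct and follows essentially the same route as the paper's: the paper's one-line argument simply asserts the existence of a feasible direction increasing $\delta_i^\star$ that improves the objective, which is exactly the perturbation you spell out in detail. Your additional observation about the degenerate case $w_i=0$ is a genuine refinement — the paper only assumes $w_i\geq 0$, so strictly speaking the lemma as stated requires $w_i>0$ (or the weaker "there exists a locally optimal solution with $\delta_i^\star=1$" conclusion you propose), a caveat the paper's proof silently omits.
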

\begin{proof}
    By contradiction, suppose that $(z^\star, \delta^\star)$ is a locally optimal solution where for all $i \in \mathcal{I} \subseteq \Z_{[1, n_\delta]}, G_i(z^\star) \leq 0$, and $0< \delta_i^\star < 1$.
    There exists a feasible direction that does not change $z^\star$ but acts on $\delta^\star$ improving the objective value.
    Therefore, $\delta^\star$ with $0< \delta_i^\star < 1, i \in \mathcal{I}$ is not optimal, but $\delta^\star$ with $\delta_i^\star = 1,$ for all $i \in \mathcal{I}$ is a locally optimal solution.
\end{proof}
Lemma~\ref{lemma: characterization optimal solution} also holds for the optimal solution of the MPVC formulation~\eqref{op: generic mpvc}.
The proof follows the same argument.
Additionally, for the MPVC we can demonstrate that a locally optimal solution does not admit fractional indicator variables.
\begin{theorem}
    Let $(z^\star, \delta^\star)$ be a locally optimal solution of~\eqref{op: generic mpvc} then $\delta^\star \in \{0, 1\}^{n_\delta}$.
\end{theorem}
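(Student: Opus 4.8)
The plan is to prove the statement component by component, performing a case distinction on the sign of $G_i(z^\star)$ at the given optimal point. The key structural observation is that, once $z^\star$ is fixed, the only coupling between each $\delta_i$ and the remaining variables is through the vanishing constraint \eqref{cns: generic vanishing constraint} and the box bound $\delta_i \in [0,1]$. Consequently each $\delta_i$ can be analyzed in isolation, and it suffices to show $\delta_i^\star \in \{0,1\}$ for an arbitrary index $i$.

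First I would treat the case $G_i(z^\star) > 0$. Here the factor $G_i(z^\star)$ in \eqref{cns: generic vanishing constraint} is strictly positive, so the requirement $\delta_i G_i(z^\star) \leq 0$ forces $\delta_i \leq 0$; together with $\delta_i \geq 0$ this pins $\delta_i^\star = 0$. This branch is purely a feasibility argument and needs no appeal to optimality.

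Second, for the case $G_i(z^\star) \leq 0$ — which covers both $G_i(z^\star) < 0$ and the boundary $G_i(z^\star) = 0$ — the vanishing constraint is slack or trivially satisfied for every $\delta_i \in [0,1]$, so the component $\delta_i$ is locally free. Here I would invoke Lemma~\ref{lemma: characterization optimal solution}, which the excerpt states holds verbatim for the MPVC \eqref{op: generic mpvc}: the cost contribution $-w_i \delta_i$ is strictly decreasing in $\delta_i$, and the feasible direction that increases $\delta_i$ toward $1$ leaves $z^\star$ unchanged while keeping $\delta_i G_i(z^\star) \leq 0$ satisfied. Local optimality therefore forces $\delta_i^\star = 1$.

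Combining the two branches yields $\delta_i^\star \in \{0,1\}$ for every $i$, hence $\delta^\star \in \{0,1\}^{n_\delta}$. The main point requiring care is the boundary case $G_i(z^\star) = 0$: it must be assigned to the ``$\leq 0$'' branch so that Lemma~\ref{lemma: characterization optimal solution} applies, and one must check that the $\delta_i$-perturbation remains feasible, which it does since $\delta_i \cdot 0 = 0 \leq 0$ for all $\delta_i$. A secondary subtlety worth flagging is that the strict improvement in the second branch relies on $w_i > 0$; when $w_i = 0$ the conclusion $\delta_i^\star = 1$ should be read in the sense inherited from Lemma~\ref{lemma: characterization optimal solution}, namely as the existence of an equivalent optimizer with binary $\delta_i$.
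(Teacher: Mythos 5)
Your proof is correct and follows essentially the same case split as the paper's own argument: $G_i(z^\star)>0$ forces $\delta_i^\star=0$ by feasibility alone, while $G_i(z^\star)\leq 0$ (including the boundary case) leaves $\delta_i$ free in $[0,1]$, so Lemma~\ref{lemma: characterization optimal solution} pins $\delta_i^\star=1$. Your remark about the degenerate case $w_i=0$ is a valid caveat that the paper's proof also silently assumes away.
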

\begin{proof}
    In~\eqref{op: generic mpvc}, the indicator variables $\delta \in [0, 1]^{n_\delta}$ enter only constraint~\eqref{cns: generic vanishing constraint}, and only three alternatives are possible
    \begin{enumerate}
        \item $G_i(z) < 0$, then constraint~\eqref{cns: generic vanishing constraint} is satisfied for $\delta_i \in [0, 1]$. An optimal solution is characterized by $\delta_i = 1$ (cf. Lemma~\ref{lemma: characterization optimal solution}).
        \item $G_i(z) = 0$ the same reasoning applies.
        \item $G_i(z) > 0$, then constraint~\eqref{cns: generic vanishing constraint} is satisfied only for $\delta_i = 0$, since $\delta_i \in [0, 1]$.
    \end{enumerate}
    Therefore, we can conclude that a locally optimal solution of~\eqref{op: generic mpvc} admits only integer indicator variables, i.e., $\delta \in \{0, 1\}^{n_\delta}$.
\end{proof}

A different situation applies when we consider the relaxation of~\eqref{op: generic minlp}, i.e., $\hat{\delta} \in [0, 1]^{n_\delta}$.
As depicted in Figure~\ref{fig: feasible set simple minlp and mpvc}, the big-M method creates a larger feasible set which admits cases with $G_i(z)>0$ and $\hat{\delta_i} >0$.
Therefore, the optimal solution of the relaxation might exploit the enlarged feasible set, producing a solution with fractional indicator variables.
However, it is possible to postprocess such relaxed solution, and recover a feasible solution for the original MINLP problem~\eqref{op: generic minlp}.

\begin{lemma}
    Given an optimal solution of the relaxed MINLP~\eqref{op: generic minlp}, i.e., with $\hat{\delta} \in [0, 1]^{n_\delta}$, the following holds
    \begin{equation}
        \hat{\delta}_i = 1 \Leftrightarrow G_i(z) \leq 0.
    \end{equation}
\end{lemma}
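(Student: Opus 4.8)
The plan is to prove the biconditional $\hat{\delta}_i = 1 \Leftrightarrow G_i(z) \leq 0$ by splitting it into its two implications and observing that they have quite different characters: one is pure feasibility and holds at \emph{any} feasible point, while the other is an optimality statement that is already essentially contained in Lemma~\ref{lemma: characterization optimal solution}. So I would not attempt a single unified argument, but rather handle each direction with the appropriate tool.

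First I would dispatch the forward implication $\hat{\delta}_i = 1 \Rightarrow G_i(z) \leq 0$. Substituting $\hat{\delta}_i = 1$ into the big-M constraint~\eqref{cns: generic bigM constraint} gives $G_i(z) \leq M(1 - 1) = 0$, so $G_i(z) \leq 0$ follows from feasibility alone, independently of optimality or the value of $w_i$. For the reverse implication $G_i(z) \leq 0 \Rightarrow \hat{\delta}_i = 1$, I would invoke Lemma~\ref{lemma: characterization optimal solution}, which is stated precisely for the relaxed MINLP~\eqref{op: generic minlp}: every index $i$ with $G_i(z^\star) \leq 0$ satisfies $\delta_i^\star = 1$ at a local optimum. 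The underlying mechanism is that when $G_i(z) \leq 0$, the constraint $G_i(z) \leq M(1-\delta_i)$ is satisfied for \emph{every} $\delta_i \in [0,1]$ (since $M > 0$ and $1 - \delta_i \geq 0$); hence $\delta_i$ can be raised to $1$ along a feasible direction that leaves $z$ and all other constraints untouched, and the cost term $-w_i \delta_i$ strictly decreases under this move, so optimality forces $\hat{\delta}_i = 1$.

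The one point I would flag carefully is the role of the weights: the reverse implication genuinely relies on $w_i > 0$, because if $w_i = 0$ the objective is indifferent to $\delta_i$ and a fractional $\hat{\delta}_i$ would remain optimal even when $G_i(z) \leq 0$, breaking the biconditional. Under the standing assumption of strictly positive weights — the regime in which maximizing the indicator is meaningful — there is no real obstacle: the proof is simply the separation of an immediate feasibility direction from an optimality direction that Lemma~\ref{lemma: characterization optimal solution} already supplies. The only mild subtlety is making sure the feasible-direction argument is applied to the correct constraint in the relaxed big-M formulation rather than to the MPVC constraint~\eqref{cns: generic vanishing constraint}, since the two relaxations have different feasible sets (cf. Figure~\ref{fig: feasible set simple minlp and mpvc}).
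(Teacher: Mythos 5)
Your proof is correct and is essentially the argument the paper intends: the paper states this lemma without proof, and your decomposition --- the forward implication as pure feasibility of the big-M constraint~\eqref{cns: generic bigM constraint} at $\hat{\delta}_i=1$, the reverse implication as the feasible-ascent-direction argument already recorded in Lemma~\ref{lemma: characterization optimal solution} --- is exactly the reasoning the surrounding text relies on. Your caveat that the reverse direction needs $w_i>0$ is a legitimate refinement, since the paper only assumes $w_i\geq 0$ and the biconditional (like Lemma~\ref{lemma: characterization optimal solution} itself) fails for $w_i=0$.
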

\begin{lemma}\label{lemma: rounding rule}
    Given an optimal solution of the  relaxed MINLP~\eqref{op: generic minlp}, i.e., with $\hat{\delta} \in [0, 1]^{n_\delta}$, a feasible solution of the original problem~\eqref{op: generic minlp} is obtained as
    \begin{equation}
        \delta_i = \begin{cases}
            0, \quad \mathrm{if} \;\; \hat{\delta}_i<1, \\
            1, \quad \mathrm{otherwise}.
        \end{cases}
    \end{equation}
\end{lemma}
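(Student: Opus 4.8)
The plan is to verify directly that the rounded point $(z,\delta)$ satisfies every constraint of the integer MINLP~\eqref{op: generic minlp}, exploiting the fact that the rounding rule modifies only the indicator variables while leaving $z$ untouched. I would first observe that the equality and inequality constraints $g(z)=0$ and $h(z)\leq 0$ do not involve $\delta$ at all; since the relaxed optimum $(z,\hat{\delta})$ is feasible for the relaxation, $z$ already satisfies them, and they remain satisfied after rounding. Likewise, $\delta\in\{0,1\}^{n_\delta}$ holds trivially by construction of the rounding rule.

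The substantive part is checking the big-M constraints $G_i(z)\leq M(1-\delta_i)$ for each $i\in\Z_{[1,n_\delta]}$, which I would handle by splitting on the value of $\hat{\delta}_i$. In the case $\hat{\delta}_i=1$ the rule sets $\delta_i=1$, so the constraint to verify reads $G_i(z)\leq 0$; this follows immediately from the relaxed feasibility inequality $G_i(z)\leq M(1-\hat{\delta}_i)=0$. In the complementary case $\hat{\delta}_i<1$ the rule sets $\delta_i=0$, so the constraint becomes $G_i(z)\leq M$, which holds by the defining property of the big-M constant, namely that $G_i(z)\leq M$ for every admissible $z$ — and $z$ is admissible, being part of a feasible relaxed solution.

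I expect no serious obstacle here; the only point requiring a moment's care is recognizing that the ``round up'' branch is automatically consistent because of the relaxed constraint itself: at $\hat{\delta}_i=1$ the big-M inequality already pins $G_i(z)\leq 0$, so no separate argument — not even an appeal to the preceding lemma — is strictly needed. One may optionally remark that rounding can only worsen the objective, since rounding some $\hat{\delta}_i<1$ down to $0$ removes the reward term $-w_i\hat{\delta}_i$; hence the recovered point is feasible but in general suboptimal. This last observation, however, lies outside the scope of the feasibility claim and so I would keep it as a closing remark rather than part of the proof proper.
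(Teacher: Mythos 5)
Your proof is correct. The paper actually states Lemma~\ref{lemma: rounding rule} without any proof, so there is nothing to compare against line by line; your direct verification (constraints not involving $\delta$ are untouched, then a case split on $\hat{\delta}_i=1$ versus $\hat{\delta}_i<1$ using the relaxed big-M inequality and the defining bound $G_i(z)\leq M$ for admissible $z$) is exactly the argument the paper implicitly relies on, and your observation that the preceding lemma is not needed for the round-up branch is accurate, since $G_i(z)\leq M(1-\hat{\delta}_i)=0$ already follows from relaxed feasibility. The only cosmetic omission is that $G_i$ is vector-valued, so the inequalities should be read component-wise, which changes nothing in the argument.
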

Lemma~\ref{lemma: rounding rule} gives a way to obtain feasible solutions for~\eqref{op: generic mpvc} which may be only suboptimal.

\subsection{Explicit formulations for problems with Heaviside step-function and indicator constraints}\label{sec: generalization to indicator constraints}
Next, we derive a \ac{MINLP} and \ac{MPVC} formulation that can be readily treated by a numerical solver for the general mathematical program introduced in~\eqref{op: intro}.
The main difficulty resides in the representation of the Heaviside step-function in the cost~\eqref{cf: extended nlp with logic implication}, and of the logical implication with general indicator constraints~\eqref{cns: intro implication}.
Here, for completeness, we consider the left-hand-side of constraint~\eqref{cns: intro implication} to hold with strict inequality.
Again, we propose a formulation with big-M constraints and one with vanishing constraints.
For the former, for $i\in \Z_{[1, n_\delta]}$ and for every admissible value of $z$, we assume $-m \leq H_i(z) \leq M$ with $m, M \geq 0$, and introduce binary variables $\delta_i \in \{0, 1\}$ to state the logical relation
\begin{equation}\label{eq: implication chain}
    H_i(z) > 0 \Rightarrow \delta_i = 1 \Rightarrow G_i(z) \leq 0.
\end{equation}
Expression~\eqref{eq: implication chain} can be enforced by the following inequality constraints
\begin{align}
    \begin{cases}\label{eq: indicator constraint integer formulation}
        H_i(z) \leq M \delta_i \\
        G_i(z) \leq M(1 - \delta_i).
    \end{cases}
\end{align}
Conversely, to represent~\eqref{cns: intro implication} with vanishing constraints it is not enough to enforce the constraint
\begin{equation}\label{eq: indicator constraint vanishing formulation}
    H_i(z) G_i(z) \leq 0, \quad i\in \Z_{[1, n_\delta]},
\end{equation}
because the product operation does not correspond to a logical implication, see Table~\ref{tab: truth table implication and product for mpvc}.
\begin{table}
    \centering
    \begin{tabular}{cc|c|c}\toprule
        $a$ & $b$ & $a \Rightarrow b$ & $ab\leq0$ \\ \midrule
        0 & 0 & 1 & 1\\
        0 & 1 & 1 & 0\\
        1 & 0 & 0 & 0\\
        1 & 1 & 1 & 1\\
        \bottomrule
    \end{tabular}
    \caption{Truth table for logical operations, where $a\coloneqq (H_i(z)>0)$ and $b\coloneqq(G_i(z)\leq0)$.}
    \label{tab: truth table implication and product for mpvc}
\end{table}
To correctly represent the logical implication we must require both~\eqref{eq: indicator constraint vanishing formulation} and
\begin{equation}\label{eq: H(z) >= 0}
    H_i(z) \geq 0, \quad i \in \Z_{[1, n_\delta]}.
\end{equation}
However, imposing~\eqref{eq: H(z) >= 0} unless it is already present in the original formulation, reduces the feasible set and possibly harms the quality of the solution.
The reduction of the feasible set is illustrated in Figure~\ref{fig: feasible set generic minlp and mpvc} where the big-M formulation~\eqref{eq: indicator constraint integer formulation} represents the logical implication correctly without requiring~\eqref{eq: H(z) >= 0}.
Thus, adding~\eqref{eq: H(z) >= 0} to~\eqref{eq: indicator constraint integer formulation} introduces an unnecessary restriction.
\begin{figure}
    \centering
    \includegraphics{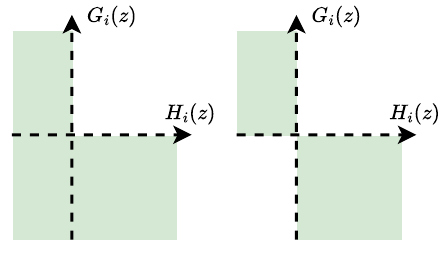}
    \caption{Left: in light green the feasible set of constraint~\eqref{eq: indicator constraint integer formulation} (the indicator variable $\delta_i$ is projected out). Right: in light green the feasible set of constraint~\eqref{eq: indicator constraint vanishing formulation}.
    In both plots the dashed lines are part of the feasible set.
    The MPVC constraints~\eqref{eq: indicator constraint vanishing formulation} do not correctly represent the logical implication. By considering~\eqref{eq: H(z) >= 0}, we obtain two identical feasible sets and a correct representation of the logical implication. However, this introduces unnecessary restrictions for the \ac{MINLP} formulation.}
    \label{fig: feasible set generic minlp and mpvc}
\end{figure}

A correct way to represent the logical implication via structured nonconvexities without restricting the feasible set involves auxiliary variables and complementarity constraints.
First, we introduce the complementarity behavior, consider the variables $x, y$, such that $0\leq x \perp y \geq 0$.
The orthogonality symbol specifies a complementarity behavior which can be made explicit in different ways as $x, y \geq 0, xy \leq 0$, or $\min(x, y) = 0$, or  $\sqrt{x^2 + y^2} - (x+y) = 0$.
Typically, the first option is the most popular.
Each of these three representations leads to different yet degenerate \acp{NLP}, since they all violate standard constraint qualification.
Let $y_i \in \R, i \in \Z_{[1, n_\delta]}$ be auxiliary variables , the feasible set represented by~\eqref{eq: indicator constraint integer formulation} can also be obtained as
\begin{align}
    \begin{cases}\label{eq: extended with mpcc}
        0 \leq y_i \perp y_i - H_i(z) \geq 0, \\
        y_i G_i(z) \leq 0.
    \end{cases}
\end{align}
Thus, the resulting problem contains both complementarity and vanishing constraints, and it can be classified as a \ac{MPCC}, since every vanishing constraint can be reformulated as a complementarity one.
Now, we have obtained two ways to reformulate the logical implication~\eqref{cns: intro implication}.
The representation of~\eqref{cns: intro implication} as~\eqref{eq: extended with mpcc} has been proposed in~\cite{Szmuk2020}, where the constraints expressed by function $G_i(z)$ are holding with equality. 

Next, we consider the representation of the Heaviside step-function in the cost~\eqref{cf: extended nlp with logic implication}.
For the big-M formulation, one can substitute $\indfn$ simply by the auxiliary indicator variables $\delta_i$.
However, when we consider~\eqref{cns: intro implication} to hold with strict inequality, i.e., ``$H_i(z)$ > 0'', optimal solutions would have an issue for $H_i(z) = 0$, as both the following cases obtained from~\eqref{eq: indicator constraint integer formulation} are feasible
\begin{align}
    \delta_i = 0:
    \begin{cases}
        H_i(z) \leq 0, \\
        G_i(z) \leq M,
    \end{cases}
    \delta_i = 1:
    \begin{cases}
        H_i(z) \leq M, \\
        G_i(z) \leq 0.
    \end{cases}
\end{align}
Based on the weighting coefficient $w_i$ in the cost~\eqref{cf: extended nlp with logic implication} one of two cases is optimal.
Indeed, the optimizer would seek a solution with $\delta_i=1$ which imposes $G_i(z) \leq 0$, even if it is not required since $H_i(z) = 0$.
Moreover, $G_i(z) \leq 0$ is harder to satisfy than $G_i(z) \leq M$.
Therefore, if the optimizer sets $\delta_i =1$, the improvement in the objective value overcomes the possible benefit of a larger feasible set.
If we want to avoid the ambiguity for $H_i(z) = 0$, we can strengthen the relation between $H_i$ and $\delta_i$
\begin{equation}
    H_i(z) > 0 \Leftrightarrow \delta_i = 1 \Rightarrow G_i(z) \leq 0,
\end{equation}
which is translated via the big-M formulation into the inequalities
\begin{align}\label{eq: eps indicator constraint integer formulation}
    \begin{cases}
        H_i(z) \leq M \delta_i \\
        H_i(z) \geq -m (1-\delta_i) + \varepsilon \\
        G_i(z) \leq M(1 - \delta_i),
    \end{cases}
\end{align}
where we modified the second inequality by adding a small positive scalar $\varepsilon \in \R_+$, in order to obtain
\begin{align}
    \delta_i = 0: H_i(z) \geq -m + \varepsilon, \quad \delta_i = 1: H_i(z) \geq \varepsilon.
\end{align}
It is evident that with this modification the solution $\delta_i = 1$ is not feasible when $H_i(z) = 0$.
Therefore, the cost term~\eqref{eq: heaviside step function} can be replaced by $\delta_i$.
Unfortunately, this modification has also reduced the feasible set of constraint~\eqref{cns: intro implication} as illustrated in Figure~\ref{fig: feasible minlp with epsilon}.
However, such change may be arbitrarily small based on $\varepsilon$.
\begin{figure}
    \centering
    \includegraphics{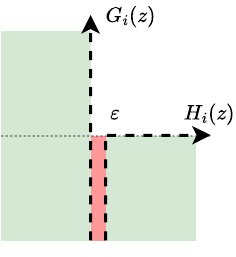}
    \caption{Feasible set of constraints~\eqref{eq: eps indicator constraint integer formulation}. The dashed lines are part of the feasible set, the red rectangle defined from zero to $\varepsilon$ is the portion of the feasible set removed.}
    \label{fig: feasible minlp with epsilon}
\end{figure}

\smallskip

We now turn our attention to the formulation with vanishing constraints.
As stated above, the logical implication can be represented by imposing~\eqref{eq: indicator constraint vanishing formulation},~\eqref{eq: H(z) >= 0}.
The unresolved issue is how to obtain the function $\indfn$ in the cost, with a representation more amenable for computations.
An option is to use a surrogate continuous function as
\begin{equation}\label{eq: sigmoid fn as indicator function}
    \approxindfn^{\mathrm{SIG}}(H_i(z)) \coloneqq \frac{1}{1 + e^{-\beta H_i(z)}}, \quad i \in \Z_{[1, n_\delta]},
\end{equation}
where the coefficient $\beta \in \R_+$ can be tuned to obtain a steeper transition between 0 and 1.
Alternatively, it is possible to represent~\eqref{eq: heaviside step function} by the \ac{KKT} conditions of the \ac{LP}
\begin{align}\label{eq: KKT representation of indicator function}
    \min_{\delta_i \in \R} \delta_i H_i(z) \; \text{ s.t. } 0 \leq \delta_i \leq 1 \Leftrightarrow
    \begin{cases}
        0 \leq \delta_i \perp \lambda_{1, i} \geq 0, \\
        0 \leq 1-\delta_i \perp \lambda_{2, i} \geq 0, \\
        H_i(z) - \lambda_{1, i} + \lambda_{2, i} = 0,
    \end{cases} \quad i \in \Z_{[1, n_\delta]},
\end{align}
where $\lambda_1, \lambda_2$ are Lagrangian multipliers associated with the constraints $0 \leq \delta_i$ and $\delta_i \leq 1$, respectively.
In this case,
\begin{align}\label{eq: indicator fn approx with KKT}
    \approxindfn^{\mathrm{KKT}}(H_i(z)) \coloneqq
    \begin{cases}
        \{0\}, & \mathrm{if} \; H_i(z) < 0, \\
        [0, 1], & \mathrm{if} \; H_i(z) = 0, \\
        \{1\}, & \mathrm{if} \; H_i(z) > 0.
    \end{cases}
\end{align}
The two different formulations of the Heaviside step-function are shown in Figure~\ref{fig: reformulation indicator function}.
Both representations are approximations of the ideal behaviour~\eqref{eq: heaviside step function}, since they cannot represent exactly the discountinuity at $H_i(z) = 0$.
While~\eqref{eq: indicator fn approx with KKT} seems closer to the desired behavior, this may come at the price of more difficult computations.

\begin{figure}
    \centering
    \includegraphics{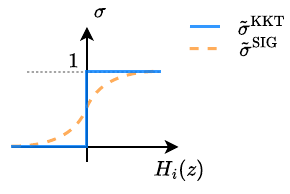}
    \caption{Reformulations of the Heaviside step-function: in dashed orange the representation via the sigmoid function~\eqref{eq: sigmoid fn as indicator function}, in solid blue the representation via \ac{KKT} conditions~\eqref{eq: KKT representation of indicator function}.}
    \label{fig: reformulation indicator function}
\end{figure}

Finally, the overall \ac{MINLP} formulation is
\begin{mini!}[2]
    {\substack{z \in \mathbb{R}^{n_z},\\\delta \in \{0, 1\}^{n_\delta}}}{f(z) - \sum_{i=1}^{n_\delta} w_i \delta_i}{\label{op: extended MINLP bigM}}{}
    \addConstraint{g(z) }{= 0, \; h(z) \leq 0}
    \addConstraint{H_i(z)}{ \leq M \delta_i,}{\quad i \in \Z_{[1, n_\delta]}}
    \addConstraint{H_i(z)}{\geq -m (1-\delta_i) + \varepsilon,}{\quad i \in \Z_{[1, n_\delta]}}
    \addConstraint{G_i(z)}{ \leq M(1 - \delta_i),}{\quad i \in \Z_{[1, n_\delta]}.}
\end{mini!}
Regarding the \ac{MPVC} formulations, the feasible set of the vanishing constraints correctly represented by~\eqref{eq: extended with mpcc} leads to formulate \acp{MPCC}.
Specifically, when the Heaviside step-function in the cost is represented by~\eqref{eq: KKT representation of indicator function}, the formulation is
\begin{mini!}[2]
    {\substack{z \in \mathbb{R}^{n_z}, \\ y_i \in [0, 1]^{n_\delta},\\ \delta, \lambda_1, \lambda_2 \in \R^{n_\delta}}}{f(z) - \sum_{i=1}^{n_\delta} w_i \delta_i}{\label{op: extended MPVC with kkt}}{}
    \addConstraint{g(z) }{= 0, \; h(z) \leq 0}
    \addConstraint{y_i G_i(z)}{\leq 0,}{\quad i \in \Z_{[1, n_\delta]}}
    \addConstraint{0 \leq y_i}{\perp y_i - H_i(z) \geq 0,}{\quad i \in \Z_{[1, n_\delta]}\label{cns: ext MPVC with kkt, complementarity 1}}
    \addConstraint{0 \leq \delta_i}{\perp \lambda_{1, i} \geq 0,}{\quad i \in \Z_{[1, n_\delta]}\label{cns: ext MPVC with kkt, complementarity 2}}
    \addConstraint{0 \leq 1-\delta_i}{\perp \lambda_{2, i} \geq 0,}{\quad i \in \Z_{[1, n_\delta]}\label{cns: ext MPVC with kkt, complementarity 3}}
    \addConstraint{H_i(z) - \lambda_{1, i} + \lambda_{2, i}}{=0,}{\quad i \in \Z_{[1, n_\delta]}.}
\end{mini!}
Although~\eqref{op: extended MPVC with kkt} does not involve integer variables, it includes complementarity constraints~\eqref{cns: ext MPVC with kkt, complementarity 1}-\eqref{cns: ext MPVC with kkt, complementarity 3}.
Complementarity constraints are akin to vanishing constraints since they violate standard constraint qualifications making the resulting \ac{NLP} hard to solve and requiring special numerical solvers.
The \ac{MPCC} reformulation with a surrogate of the Heaviside step-function, e.g.,~\eqref{eq: sigmoid fn as indicator function}, is
\begin{mini!}[2]
    {\substack{z \in \mathbb{R}^{n_z}, \\y_i \in [0, 1]^{n_\delta}}}{f(z) - \sum_{i=1}^{n_\delta} w_i \approxindfn^\textrm{SIG}(H_i(z))}{\label{op: extended MPVC with surrogate}}{}
    \addConstraint{g(z) }{= 0, \; h(z) \leq 0}
    \addConstraint{y_i G_i(z)}{\leq 0,}{\quad i \in \Z_{[1, n_\delta]}}
    \addConstraint{0 \leq y_i}{\perp y_i - H_i(z) \geq 0,}{\quad i \in \Z_{[1, n_\delta]}.}
\end{mini!}

\section{Methods for solving MINLP and MPVC}\label{sec: solution methods}
In this section, we review some of the methods for solving \acp{MINLP} and \acp{MPVC}.
The methods presented have been selected because they are general methods for their respective problem classes, they rely on existing software packages and adopt stable solvers in their subroutines, their implementations is open-source, any dependence on closed-source solvers is optional, and they have a user-friendly interface suitable for \acp{OCP}.
At the end of the section, we compare the \ac{MINLP} and the \ac{MPVC} formulations on a tutorial example based on the \ac{UGV}/\acp{UAV} coordination problem.

\subsection{Solving MINLPs}
For the solution of \acp{MINLP} we briefly present two solution methods: \ac{NBB}\cite{Dakin1965,Gupta1985} and \ac{S-B-MIQP}~\cite{Ghezzi2024}.
\ac{NBB} is a standard and well-studied solution approach that has multiple open-source and commercial implementations.
Instead, \ac{S-B-MIQP} is a recently proposed algorithm that has proven to be suitable for nonconvex MINLP arising from the time discretization of mixed-integer \ac{OCP}.

\subsubsection{Nonlinear branch-and-bound}
\Acf{NBB} is the direct extension of the branch-and-bound method for solving \ac{MILP}, introduced by~\cite{Land1960}, first presented for \acp{MINLP} in~\cite{Dakin1965}, and further studied in~\cite{Gupta1985} for convex \acp{MINLP}.
For convex \acp{MINLP}, \ac{NBB} returns the global optimum, if one exists, while for nonconvex \acp{MINLP} it only finds feasible solutions.
\ac{NBB} is often taken as baseline method for solving \acp{MINLP} since it is a general method and there exists a well-interfaced open-source implementation in the Bonmin software package~\cite{Bonami2005}.

\ac{NBB} solves the \ac{MINLP} by relaxing the integer variables and solving the continuous (convex) \ac{NLP} relaxations.
The search is typically represented by a tree in which each node is a continuous NLP to solve.
If a feasible solution of the NLP relaxation has all integer variables taking integer values, then it is also feasible (potentially globally optimal) for the MINLP.
Each continuous relaxation with some real valued integer variables is branched into two new NLP subproblems, where a new fractional integer variable is fixed to its lower and upper bound, respectively.
The solution of the node problem provide a valid lower bound that can be exploited for branching, and in case the solution is integer feasible it also provides a valid upper bound, which is useful for pruning.
In fact, if the solution of a node has an objective higher than the current upper bound, the node can be pruned from the tree.
Many ingredients are necessary for obtaining an efficient \ac{NBB} algorithm such as tight continuous relaxation, cuts to strengthen the relaxations, efficient integration of the NLP solver, and branching strategies.
For more details on \ac{NBB} see, e.g.,~\cite{Bonami2013, Floudas1995}

\subsubsection{Sequential Benders-based MIQP}
This method tackles the \ac{MINLP} solution from the point of view of decomposition methods, where the aim is to solve separately the continuous and the integer part.
The method presented in~\cite{Ghezzi2024}, which builds on ideas in~\cite{Buerger2023, Ghezzi2023a}, is developed to directly address \acp{MINLP} arising from the time discretization of \acp{OCP}, and it has shown to be competitive with state of the art solvers on existing benchmarks.
Moreover, an open-source implementation is available in the software package \texttt{CAMINO}~\cite{camino}.
\ac{S-B-MIQP} is based on a three-step procedure.
First, the problem is linearized at the current best solution (``linearization point''), which corresponds the feasible solution of the original \ac{MINLP} with the lowest objective among all points visited by the algorithm.
Second, a \ac{MIQP} with positive semidefinite Hessian approximation is constructed at the linearization point and solved.
Third, the integer solution obtained from the MIQP is fixed into the original MINLP resulting in a continuous NLP, which is then solved.
If the NLP is feasible, its solution together with the fixed integer variables is a feasible solution of the original MINLP.
If the NLP is not feasible, the algorithm switches to a feasibility NLP similarly to the outer approximation scheme proposed in~\cite{Fletcher1994}.
The solutions are used to construct new cutting planes that restrict the integer search space of the \ac{MIQP} to be solved in the next algorithm iteration.
This cutting planes are similar to the ones derived in the \ac{GBD}~\cite{Geoffrion1972}, upgraded with a regularization method proposed in~\cite{Kronqvist2020}.
Solving exclusively \acp{MIQP} in a decomposition scheme does not guarantee termination with a global optimum in case of convex MINLP, as already shown in~\cite{Fletcher1994}, because the solution of a \ac{MIQP} does not provide a valid lower bound for the solution of the original \ac{MINLP}.
Therefore, \ac{S-B-MIQP} solves a tailored \ac{MILP} whenever the solution of the \ac{MIQP} stagnates during the S-B-MIQP iterations.
The \ac{MILP} constructed in \ac{S-B-MIQP} is similar to the one adopted in \ac{GBD} with additional linear constraints resulting from the linearization of the constraints in the original \ac{MINLP} about the current best point.

\subsection{Solving MPVCs}\label{sec: solution methods for mpvc}

\acp{MPVC} can be reformulated equivalently as \acp{MPCC}, and therefore, share similar solution techniques.
Crucially, a solver method for \acp{MPVC} needs a custom way to deal with the structured nonconvexity.
As already shown in~\cite{Hoheisel2008} and~\cite[\S 9.3]{Kirches2011}, adopting a generic NLP solver for \ac{MPVC} would often result in convergence issues due to the lack of constraint qualifications.
Two distinct methods exist to treat nonconvexity, one is based on an active set method and one on a relaxation method.
To the authors' knowledge an active set-based solver for \ac{MPVC} is implemented and tested only in~\cite{Kirches2011}, but no public implementation is available.
Conversely, relaxation methods rely on generic \ac{NLP} solvers and on a homotopy loop which can be quickly implemented.
In~\cite{Nurkanovic2023}, it is shown that even simple relaxation methods are effective for solving a large set of \ac{OCP} with complementarity constraints.
The relaxation approach adopted here has been introduced by Scholtes~\cite{Scholtes2001} for solving \acp{MPCC}, and later presented for \acp{MPVC} by~\cite[\S 10]{Hoheisel2009}.
An algorithm similar to the one utilized in this work has been adopted in~\cite{Jung2013, Meyer2018}.
We introduce a vector $\boldsymbol{\tau} \in \R^{n_{G_i}}$ such that $\boldsymbol{\tau} \coloneqq (\tau, \dots, \tau)$ that relaxes the vanishing constraint into
\begin{equation}\label{cns: relaxed vanishing constraint}
    G_i(z) H_i(z) \leq \boldsymbol{\tau}, \quad i \in \Z_{[1, n_\delta]}.
\end{equation}
Then, we solve~\eqref{op: generic mpvc} with the relaxed constraint~\eqref{cns: relaxed vanishing constraint} within an homotopy loop as presented in Algorithm~\ref{alg:homotopy}.
In the numerical simulations the parameters of Algorithm~\ref{alg:homotopy} are set as follows: $\tau_0 = 10^2$, $\varepsilon_0=0.6$, $\tau_{\text{min}} = 10^{-3}$, $\kappa_0 = 1.6$, $\kappa_1 = 1.2$.
\begin{algorithm}
    \caption{Homotopy method for the solution of \acp{MPVC}}
    \label{alg:homotopy}
    \begin{algorithmic}[1]
    \Require Initial guess $z^\star$, $\tau = \tau_0 > 1$, $\varepsilon=\varepsilon_0 < 1$, $\tau_{\text{min}} < 1$, $\kappa_0 > 1$, $\kappa_1 > 1$
    \While{$\tau^\star > \tau_{\text{min}}$}
        \State Set $\tau = \varepsilon \cdot \tau^\star$
        \State Solve problem with corresponding $\tau$ starting from last solution $z^\star$
        \If {problem is locally infeasible}:
        \State $\varepsilon = \kappa_0 \cdot \varepsilon$
        \Else \State Store solution as $z^\star$, $\tau^\star = \tau$, $\varepsilon = \varepsilon/ \kappa_1$
        \EndIf
    \EndWhile
    \State \textbf{return} $z^*$
    \end{algorithmic}
\end{algorithm}

\subsection{Tutorial example: UGV/UAVs coordination problem}
In the following, we present an example illustrating how to formulate and solve the MINLP and the MPVC versions of the UGV/UAVs coordination problem introduced in Section~\ref{sec: trajectory planning problems}.
For modeling the UGV, we adopt a standard single-track kinematic model where the state is $x(t)=(p_x(t), p_y(t), \theta(t), v(t), \phi(t))$, and the control $u(t) = (a(t), \psi(t))$.
Position of the center of the rear axle along the axes $x, y$ is denoted with $p_x, p_y$, respectively, $\theta$ is the heading angle, $v$ is the velocity, $\phi$ is the steering angle, $a$ is the acceleration, and $\psi$ is the steering angular rate.
The dynamics are described by the \ac{ODE}
\begin{align}\label{eq: ugv ode}
    \dot{x} =
    \left( \begin{matrix} \dot{p}_x \\ \dot{p}_y \\ \dot{\theta} \\ \dot{v} \\ \dot{\phi} \end{matrix} \right) =
    \begin{cases}
        v \cos(\theta), \\ v \sin(\theta), \\ \dfrac{v \tan(\phi)}{L}, \\ a, \\ \psi.
    \end{cases}
\end{align}
State and control vectors are subject to constraints,
\begin{align}\label{eq: box constraint ugv}
    \mathcal{X} &= \{x \in \R^5 \;| \; |\theta| \leq \theta_\mathrm{max}, v \in [v_\mathrm{min}, v_\mathrm{max}], |\phi| \leq \phi_\mathrm{max} \}, \\
    \mathcal{U} &= \{u \in \R^2 \; | \; |a|\leq a_\mathrm{max}, |\psi| \leq \psi_\mathrm{max} \}.
\end{align}
With a slight abuse of notation we denote with $p$ the concatenation of the position along the two axes, $p = (p_x, p_y)$.
For simplicity here, the reachable sets of the monitoring targets are expressed as rectangles in the position space as
\begin{equation}
    A_i p + b_i \leq 0, \quad A_i \in \R^{4\times 4}, b_i \in \R^4, \quad i \in \Z_{[1, n_\mathrm{p}]},
\end{equation}
and they are depicted in Figure~\ref{fig: position comparison ugv} with pink rectangles, and $n_\mathrm{p}=5$.

As described earlier we aim at planning a time-optimal trajectory for the \ac{UGV} that goes from the start to the end point while visiting at least once each monitoring targets and fulfilling constraints.
To avoid a tuning effort beyond the scope of this illustrative example, we simply fix the final time $t_\mathrm{f} = 76$ seconds.
We formulate an \ac{OCP} to model such trajectory planning problem, and we solve it by using direct multiple shooting~\cite{Bock1984}.
Here, we specialize the OCP template~\eqref{op: OCP-NLP template} as
\begin{mini!}
    {\mathbf{x},\mathbf{u}, \boldsymbol{\delta}}{w\delta_N + \sum_{k=0}^{N-1} \left( \norm{u_k}^2 + w \sum_{i=1}^{n_\mathrm{p}} \delta_{k, i} \right)}{\label{op: ugv OCP}}{}
    \addConstraint{x_0}{= \bar{x}_0}
    \addConstraint{x_{k+1}}{= F(x_k, u_k, h),}{\quad k \in \Z_{[0, N-1]}}
    \addConstraint{u_k}{\in \mathcal{U},}{\quad k \in \Z_{[0, N-1]}}
    \addConstraint{x_k}{\in \mathcal{X},}{\quad k \in \Z_{[0, N]}}
    \addConstraint{\delta_{k, i} = 1}{\Rightarrow A_{i} p_k + b_{i} \leq 0,}{\quad k \in \Z_{[0, N-1]}, i \in \Z_{[1, n_\mathrm{p}]} \label{cns: logic cns ugv}}
    \addConstraint{\sum_{k=0}^{N} \delta_{k, i}}{\geq 1,}{\quad i \in \Z_{[1, n_\mathrm{p}]} \label{cns: at least one delta ugv}}
    \addConstraint{\delta_{k, i}}{\in [0, 1],}{\quad k \in \Z_{[0, N-1]}, i \in \Z_{[1, n_\mathrm{p}],} \label{cns: delta type}}
\end{mini!}
where $\boldsymbol{\delta} \coloneqq (\delta_{0, 1}, \dots, \delta_{0, n_\mathrm{p}}, \dots, \delta_{N, 1}, \dots, \delta_{N, n_\mathrm{p}})$.
Function $F$ corresponds to the discretization of~\eqref{eq: ugv ode} with a 4-step explicit Runge-Kutta integrator with sampling time $t_\mathrm{d} = t_\mathrm{f} / N$.
Also, constraints~\eqref{eq: box constraint ugv} are discretized and imposed only at the grid nodes.
Constraints~\eqref{eq: ugv logic cns generic 1},~\eqref{eq: ugv logic cns generic 2} introduced earlier are included in~\eqref{op: ugv OCP} as~\eqref{cns: logic cns ugv},~\eqref{cns: at least one delta ugv}.

For the MINLP formulation, we modify~\eqref{cns: delta type} such that the indicator variables are binaries, thus $\delta_{k, i} \in \{0, 1\}, k \in \Z_{[0, N-1]}, i \in \Z_{[1, n_\mathrm{p}]}$.
Moreover,~\eqref{cns: logic cns ugv} is represented via big-M constraints as in~\eqref{op: generic minlp} resulting in
\begin{align}
    A_i p_k + b_i \leq M (1-\delta_{k, i}), \quad k \in \Z_{[0, N-1]}, i \in \Z_{[1, n_\mathrm{p}]}.
\end{align}
For the MPVC formulation,~\eqref{cns: logic cns ugv} is represented as in~\eqref{op: generic mpvc} resulting in
\begin{align}\label{eq: vanishing cns tutorial example}
    \delta_{k, i} (A_i p_k + b_i) \leq 0, \quad k \in \Z_{[0, N-1]}, i \in \Z_{[1, n_\mathrm{p}]}.
\end{align}
The parameters used in the simulation are contained in Table~\ref{tab: params ugv}.
The two approaches share the same initial guess.
\begin{table}
    \centering
    \begin{tabular}{lrl}\toprule
        Parameter & Value & Unit \\ \midrule
        $L$ & 0.1 & m\\
        $a_\mathrm{max}$                                & 0.05 & m/s$^2$ \\
        $\psi_\mathrm{max}$                                & 0.5 & deg/s \\
        $\theta_\mathrm{max}$                                & 175 & deg \\
        $(v_\mathrm{min}, v_\mathrm{max})$                                & (0.1, 0.8) & m/s\\
        $\phi_\mathrm{max}$                                & 5 & deg \\
        $t_\mathrm{d}$                    & 3.8 & s \\
        $N$                     & 20 & - \\
        $\bar{x}_0$ & (0, 0, 0, 0.15, 0) & -\\
        $\bar{x}_N$ & (10, 10, 0, 0.15, 0) & - \\
        $w$ & -38 & - \\
        $M$ & 12 & - \\
        $\tau_\mathrm{min}$ & $10^{-4}$ & - \\
        \bottomrule
    \end{tabular}
    \caption{Parameters of~\eqref{op: ugv OCP} used in the simulations.}
    \label{tab: params ugv}
\end{table}
We solve the MINLP using the S-B-MIQP algorithm~\cite{Ghezzi2024} implemented in \texttt{CAMINO}~\cite{camino}, with \texttt{Gurobi} v10.0.3~\cite{Gurobi} as \ac{MIQP} solver, and \texttt{IPOPT} v3.14.11~\cite{Waechter2006} as \ac{NLP} solver.
The MPVC is solved with the homotopy method described in Algorithm~\ref{alg:homotopy} using \texttt{IPOPT} as NLP solver.
The example is coded in Python, the homotopy loop is also coded in Python, and \texttt{Gurobi} is used with Presolve disabled.
We use \texttt{CasADi}~\cite{Andersson2019} to model both the MINLP and the MPVC problem, and to interface the required solvers.
The algorithms within \texttt{CAMINO} adopt the same interface of \texttt{CasADi}, therefore using S-B-MIQP does not require additional interfacing work compared to Alg.~\ref{alg:homotopy}.
The code executes as a single thread on a desktop machine running Ubuntu 22.04 with an Intel(R) Core(TM) i9-13900K CPU and 128GB of memory.
Figure~\ref{fig: position comparison ugv} compares the position trajectories obtained with the two formulations, and Figure~\ref{fig: indicator comparison ugv} compares the values of the indicator variables along the two trajectories for each monitoring targets.
Table~\ref{tab: cost comparison of ugv} compares the constituent components of the cost for both formulations.
While the two formulations share the same parameters, they converge to two different solutions.
Specifically, the MINLP solution spends more time within the targets at the cost of a slightly higher control actuation compared to the MPVC solution.
The overall objective achieved by the MINLP solution is lower compared to the objective of the MPVC solution.
It is possible to achieve similar objective value with the MPVC approach by tuning the weight $w$.
Table~\ref{tab: cost comparison of ugv} reports also the computation time for the two formulations.
For this relative simple problem the runtime for solving the MINLP is much higher than that of the MPVC.
However, allowing multithreading computation for the \acp{MIQP} can reduce runtime dramatically.
On our machine, allowing \texttt{Gurobi} to use up to 32 threads reduces the runtime about 6.5 times, achieving a runtime of 4.71 seconds.
We highlight that both algorithms -- S-B-MIQP for MINLP and homotopy for MPVC -- are general purpose methods for their respective problem class.
Thus, they work out of the box without extensive tuning.
\begin{table}
    \centering
    \begin{tabular}{lrr}\toprule
        \multirow{2}{*}{Performance} & \multicolumn{2}{c}{Formulation}  \\ \cmidrule{2-3}
        measure & MINLP     & MPVC \\ \midrule
        $\sum_{k=0}^{N-1} \norm{u}_2^2$        & 0.028     & 0.025 \\
        $\sum_{k=0}^{N} \sum_{i=1}^{n_\mathrm{p}} \delta_{k, i}$    & 16         & 13 \\
        Objective & -607.97 & -493.98 \\ \midrule
        Runtime [s] & 31.00 & 1.78 \\
        \bottomrule
    \end{tabular}
    \caption{Objective breakdown and runtime of MINLP and MPVC for~\eqref{op: ugv OCP}.}
    \label{tab: cost comparison of ugv}
\end{table}
\begin{figure}
    \centering
    \subfigure{\includegraphics[width=0.35\textwidth]{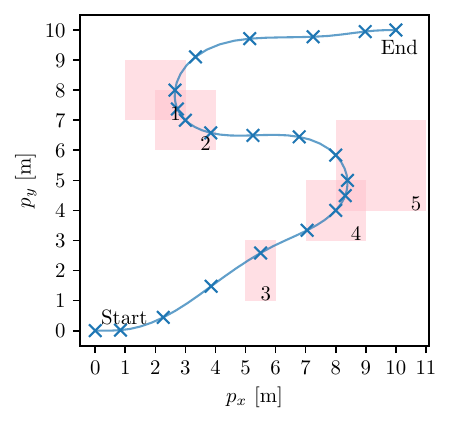}}
    \subfigure{\includegraphics[width=0.35\textwidth]{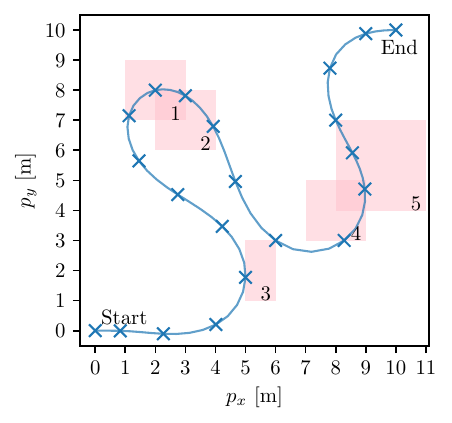}}
    \caption{Locally optimal UGV position trajectories for MINLP (left) and MPVC (right) for~\eqref{op: ugv OCP}.\label{fig: position comparison ugv}}
\end{figure}
\begin{figure}
    \centering
    \subfigure{\includegraphics[width=0.4\textwidth]{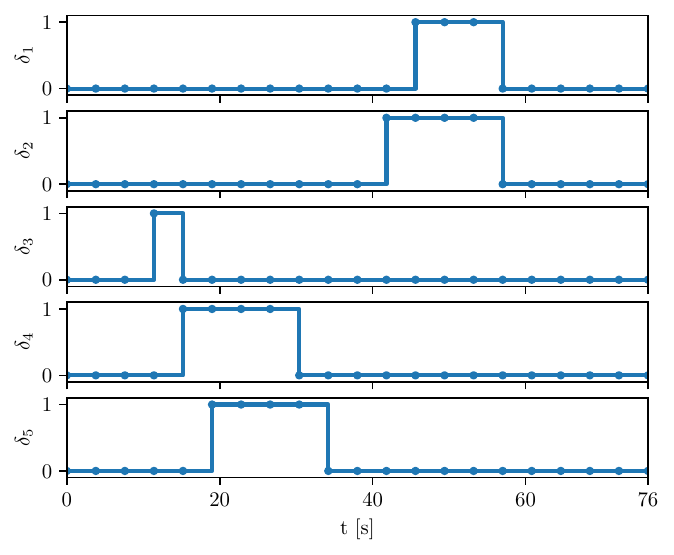}}
    \subfigure{\includegraphics[width=0.4\textwidth]{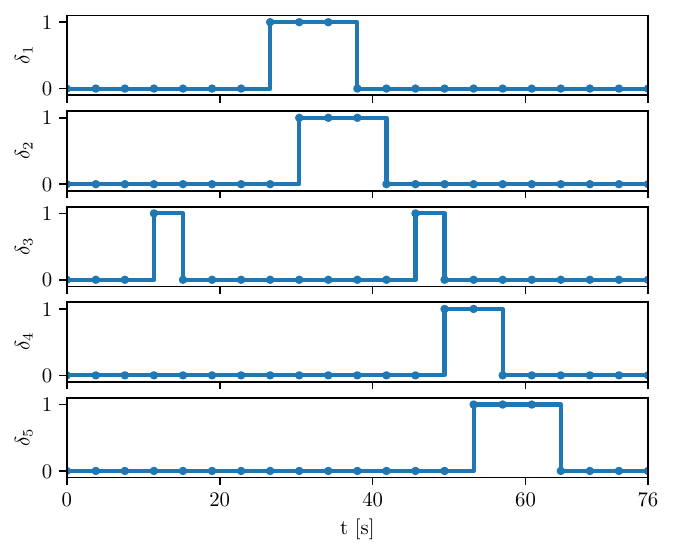}}
    \caption{Locally optimal trajectories of indicator variables for MINLP (left) and MPVC (right) for~\eqref{op: ugv OCP}.\label{fig: indicator comparison ugv}}
\end{figure}

\begin{remark}\label{remark: modelling choices}
    We want to emphasize some aspects behind our modeling choices.
    First, in the considered problems while the dynamical system has continuous state and control, the integer part is related only to the indicator variables.
    Therefore, there is no need of specific reformulations that are typically adopted in mixed-integer optimal control such as the partial outer convexification~\cite{Sager2011a}.

    Although the big-M formulation adopted for the logical constraints generally produces weaker relaxations compared to a convex hull formulation, it has the advantage of not introducing auxiliary variables and constraints.
    This is a relevant aspect since \ac{OCP} over long horizons already have large dimensions.
    For a similar reason, we do not introduce an auxiliary variable for simplifying the vanishing constraint~\eqref{eq: vanishing cns tutorial example}.

    Finally, we consider polytopes represented via halfspaces instead of vertices to avoid equality constraints, which are generally difficult to treat in \acp{MINLP}, and to avoid introducing additional variables for representing the coefficients of the vertices' convex combination.
    As a drawback, with the halfspace representation we have to impose several linear inequalities for each polytope.
    These inequalities can be tackled easily by a \ac{MIP} solver, especially if it has an effective presolve routine to eliminate redundant constraints and tighten variable bounds~\cite{Achterberg2020}.
    For a \ac{NLP} solver, a large amount of linear inequalities does not usually introduce challenges for convergence but rather for memory allocation and time spent executing linear algebra routines, for instance for matrix factorizations.
\end{remark}

\section{Case study: Divert-feasible powered descent guidance}\label{sec: landing results}
We now analyze the problem of \ac{PDG} with divert-feasible regions for Mars landing, and present formulations, solution methods and simulations for both the \ac{MINLP} and \ac{MPVC} approach.
Additionally, by means of a detailed comparison, we demonstrate computation time and objective value efficiency for both methods, when considering a problem instance close to a real application.

\subsection{Modeling}
We consider the Mars landing problem described in~\cite[p. 87]{Malyuta2022}.
The lander dynamics corresponds to a double integrator with variable mass, and it is described by the following nonlinear \ac{ODE}
\begin{align}\label{eq: ode lander}
    \begin{cases}
        \dot{r}(t) = v(t), \\
        \dot{v}(t) = g_\mathrm{mars} \hat{e}_z  + \frac{u(t)}{m(t)} - \omega^\times \omega^\times r(t) - 2 \omega^\times v(t),\\
        \dot{m}(t) = \shortminus \frac{\norm{u(t)}_2}{g_\mathrm{earth}I_\mathrm{sp}},
    \end{cases}
\end{align}
where $g_\mathrm{mars} \in \R$ is the constant gravitational acceleration of Mars, $\omega \in \R^3$ is Mars' constant angular velocity, $^\times$ denotes the vector cross-product, $g_\mathrm{earth} \in \R$ is the constant gravitational acceleration of Earth, and $I_\mathrm{sp}$ is the rocket's engine specific impulse.
The value of model parameters are reported in Table~\ref{tab: params}.
The state comprises the Cartesian position and velocity along axes $xyz$, denoted by $r(t)$ and $v(t)$, respectively, and the mass of the lander, denoted by $m(t)$.
Additionally, we assume that the lander is moving in a constant gravitational field and is viewed in the planet's rotating frame.
Drag forces are neglected as Mars' atmosphere has low density.
The \ac{ODE} can be written compactly as
\begin{equation*}
    \dot{x}(t) = f(x(t), u(t)), \quad x(t)=(r(t), v(t), m(t)) \in \R^7, \; u(t) \in \R^3.
\end{equation*}
In what follows, we drop the dependency on time for simplicity of notation.
The control $u$ is the thrust that can be produced by the lander along axes $xyz$, respectively.
State vector and control vector are constrained to lie in the sets $\mathcal{X}$ and $\mathcal{U}$, respectively,
\begin{align}
    \mathcal{X} &= \{ x \in \R^7 \; | \;  \hat{e}_z^\top r \geq \norm{r}_2 \cos(\gamma_\mathrm{gs})\}, \label{eq: state constraints on mars lander} \\
    \mathcal{U} &= \{ u \in \R^3 \; | \; 0 < \rho_\mathrm{lb} \leq \norm{u}_2 \leq \rho_\mathrm{ub}, \quad \hat{e}_z^\top u \geq \norm{u}_2 \cos(\gamma_\mathrm{p})\}. \label{eq: control constraints on mars lander}
\end{align}
A glide-slope constraint~\eqref{eq: state constraints on mars lander} is imposed to limit the approaching angle of the spacecraft with respect to the landing site.
The control constraints~\eqref{eq: control constraints on mars lander} limit the total thrust and the so-called ``pointing angle'' of the spacecraft.
The left-hand-side of the thrust norm makes constraint~\eqref{eq: control constraints on mars lander} nonconvex.
For the considered 3-DoF model, the pointing angle of the spacecraft is approximated based on the ratio between the vertical thrust and the total thrust.
Limiting the pointing angle is necessary to obtain trajectories that well approximate ones computed with more sophisticated dynamical models.
Table~\ref{tab: params} contains the values of the model and constraint parameters.
\begin{table}
    \centering
    \begin{tabular}{lrl}\toprule
        Parameter & Value & Unit \\ \midrule
        $\gamma_\mathrm{gs}$                    & 86 & deg \\
        $\gamma_\mathrm{p}$                     & 40 & deg \\\
        $\omega$                                & $10^{-3} \cdot [3.5, 0, 2]^\top$ & $1/\mathrm{s}$\\
        $g_\mathrm{mars}$                       & -3.71 & $\mathrm{m/s^2}$ \\
        $g_\mathrm{earth}$                      & 9.807 & $\mathrm{m/s^2}$ \\
        $I_\mathrm{sp}$                         & 225 & $\mathrm{s}$ \\
        $\hat{e}_z$                             & $[0, 0, 1]^\top$  & $\shortminus$ \\
        $(\rho_\mathrm{lb}, \rho_\mathrm{ub})$  & $(4971, 13258)$& $\mathrm{N}$ \\
        $m_\mathrm{wet}$                       & 1905 & kg \\
        $m_\mathrm{dry}$                        & 1505 & kg \\
        \bottomrule
    \end{tabular}
    \caption{Model and constraint parameters in spacecraft landing simulations.}
    \label{tab: params}
\end{table}

\subsection{OCP formulations}
In this subsection, we provide a detailed formulation of the \ac{OCP}.
As introduced in Sec.~\ref{sec: trajectory planning problems}, divert-feasible regions are represented as polytopes.
Our goal is to compute trajectories that remain within these polytopes for as long as possible while balancing with the minimization of fuel consumption.

\subsubsection{MINLP formulation}
We can formulate the following \ac{MINLP} via direct multiple shooting~\cite{Bock1984} by discretizing the spacecraft dynamics and constraints, and by adding divert-feasible regions
\begin{mini!}
    {t_\mathrm{f}, \mathbf{u}, \mathbf{x}, \boldsymbol{\delta}}{-w_0 m_N - w_1\sum_{i=1}^{n_\mathrm{p}}  \delta_{i, k}}{\label{op: minlp lander}}{\label{cfn: minlp lander cost fn}}
    \addConstraint{x_0}{= (\bar{r}_0, \bar{v}_0, \bar{m}_0) \label{op: minlp case study cns 1}}
    \addConstraint{x_{k+1}}{=F(x_k, u_k, \tfrac{t_\mathrm{f}}{N}),}{\quad k \in \Z_{[0, N-1]} \label{op: minlp case study cns 2}}
    \addConstraint{\rho_\mathrm{lb}}{\leq u_k \leq \rho_\mathrm{ub},}{\quad k \in \Z_{[0, N-1]}\label{op: minlp case study cns 3}}
    \addConstraint{\rho_\mathrm{lb}}{\leq \norm{u_k}_2 \leq \rho_\mathrm{ub},}{\quad k \in \Z_{[0, N-1]}\label{op: minlp case study cns 4}}
    \addConstraint{\cos(\gamma_\mathrm{p}) \norm{u_k}_2}{\leq \hat{e}_z^\top u_k,}{\quad k \in \Z_{[0, N-1]}\label{op: minlp case study cns 5}}
    \addConstraint{\cos(\gamma_\mathrm{gs}) \norm{r_k}_2}{\leq \hat{e}_z^\top r_k,}{\quad k \in \Z_{[0, N-1]}\label{op: minlp case study cns 6}}
    \addConstraint{r_N = 0, v_N}{= 0, m_N \geq m_\mathrm{dry}\label{op: minlp case study cns 7}}
    \addConstraint{A_i \xi_{k, i} - b_i}{\leq M_i (1 - \delta_{k, i}),}{\quad k \in \Z_{[0, N]}, i \in \Z_{[1, n_\mathrm{p}]} \label{cns: minlp lander polytope}}
    \addConstraint{\delta_{k, i}}{\in \{0, 1\},}{\quad k \in \Z_{[0, N]}, i \in \Z_{[1, n_\mathrm{p}]},}
\end{mini!}
where $F$ is the discretization of the associated \ac{ODE} via a 1-step explicit Runge-Kutta integrator of order 4.
The bold letters $\mathbf{x}\coloneqq(x_0, \dots, x_{N})$, $\mathbf{u}=(u_0, \dots, u_{N-1})$, $\boldsymbol{\delta} = (\delta_{0, 0}, \dots, \delta_{n_\mathrm{p}, 0}, \dots, \delta_{0, N}, \dots, \delta_{n_\mathrm{p}, N})$ define the stage-wise concatenation of state, continuous control and binary indicator variables, respectively.
Remember that $\xi_k$ in constraint~\eqref{cns: minlp lander polytope} is defined as $\xi_k \coloneqq (r_k, v_k)$.
The scalar $M_i \in \R_+$ is a valid upperbound for the left-hand side of the corresponding constraint.

\subsubsection{MPVC formulation}
Again, via direct multiple shooting, we formulate the \ac{OCP}, this time by introducing vanishing constraints and indicator variables, and obtain the \ac{NLP}
\begin{mini!}
    {h, \mathbf{u}, \mathbf{x}, \boldsymbol{\delta}}{-m_N - \frac{t_\mathrm{f}}{N} \sum_{k=0}^{N}\sum_{i=1}^{n_\mathrm{p}}  \delta_{i, k}}{\label{op: mpvc lander}}{\label{cfn: mpvc lander cost fn}}
    \addConstraint{\eqref{op: minlp case study cns 1},~\eqref{op: minlp case study cns 2},~\eqref{op: minlp case study cns 3},}{\;\eqref{op: minlp case study cns 4},~\eqref{op: minlp case study cns 5},~\eqref{op: minlp case study cns 6},~\eqref{op: minlp case study cns 7} \nonumber}
    \addConstraint{\delta_{k, i} (A_i \xi_{k, i} - b_i)}{\leq \tau,}{\quad k \in \Z_{[0, N]}, i \in \Z_{[1, n_\mathrm{p}]} \label{cns: mpvc lander polytope}}
    \addConstraint{\delta_{k, i}}{\in [0, 1],}{\quad k \in \Z_{[0, N]}, i \in \Z_{[1, n_\mathrm{p}]},}
\end{mini!}
where $\tau \geq 0$ is the homotopy parameter.
For $\tau = 0$ the \ac{MPVC}~\eqref{op: mpvc lander} shares the same minimizers of the \ac{MINLP}~\eqref{op: minlp lander}.
Of course, even if~\eqref{op: minlp lander},~\eqref{op: mpvc lander} share the same parameters and initialization, they might converge to different local optima since the problems are nonconvex, and the methods adopted to solve them perform different operations to find local optima.

\subsubsection{Additional modifications to both formulations}
Regarding the discretization of the \ac{ODE}, we have parameterized the controls with piecewise linear and continuous functions.
This is a common choice for \ac{PDG} problems since it has proven to provide enough numerical accuracy without compromising computation time~\cite{Scharf2017}.
Hence, in practice, we augment~\eqref{eq: ode lander} with an integrator to represent the current thrust and create a new control representing the thrust increment.
The augmented \ac{ODE} is given by
\begin{align*}
        \begin{cases}
       ~\eqref{eq: ode lander}, \\
        \dot{u}(t) = \mu(t),
    \end{cases}
\end{align*}
such that
\begin{equation*}
    \dot{\tilde{x}}(t) = \tilde{f}(\tilde{x}(t), \mu(t)), \; \mathrm{with} \; \tilde{x}(t) = (r(t), v(t), m(t), u(t)) \in \R^{10}, \mu(t) \in \R^3.
\end{equation*}
The incremental thrust $\mu(t)$ is unbounded.
However, we add a penalization term to the cost function to obtain smoother activation profiles.
Therefore,~\eqref{cfn: minlp lander cost fn} is updated as follows
\begin{equation*}
    -w_0 m_N - w_1 \sum_{k=0}^{N}\sum_{i=1}^{n_\mathrm{p}}  \delta_{i, k} + w_2\sum_{k=0}^{N-1} \norm{\mu_k}_2^2.
\end{equation*}
Besides the integer variables in~\eqref{op: minlp lander} and the nonconvex vanishing constraints in~\eqref{op: mpvc lander}, both problems have a nonlinear dynamics~\eqref{op: minlp case study cns 2} and a nonconvex constraint, since the total thrust is lower bounded~\eqref{op: minlp case study cns 4}.
Thus,~\eqref{op: minlp lander} and~\eqref{op: mpvc lander} are challenging to solve.
To obtain computationally tractable problems, we consider some further simplifications.
First, we do not optimize for final time, and we fix it to 75 seconds.
This value corresponds to the minimum time and fuel optimal trajectory that is obtained by solving the standard \ac{PDG} problem in~\cite[p. 87]{Malyuta2022}.
Second, we add slack variables to soften the terminal state constraints on position and velocity to improve numerical stability, and we penalize the use of slacks by a squared euclidean norm term in the cost function.
Constraint~\eqref{op: minlp case study cns 7} is modified as
\begin{align*}
    r_N - s_{r, N} = 0, \; v_N - s_{v, N} = 0, \quad s_{r, N}, s_{v, N} \in \R^3,
\end{align*}
where $s_{r, N}, s_{v, N}$ slack the final position and velocity, respectively.
Also, $s_{r, N}, s_{v, N}$ are bounded such that
\begin{align*}
    \begin{bmatrix} -5 \\ -5 \\ 0   \end{bmatrix} \leq  s_{p, N} \leq \begin{bmatrix} 5 \\ 5 \\ 5   \end{bmatrix},   \norm{s_{v, N}} \leq \begin{bmatrix} 0.01 \\ 0.01 \\ 0.01  \end{bmatrix}.
\end{align*}
In this way, we guarantee that any feasible solution has a maximal deviation from the prescribed landing site of 5 meters along each coordinate, and a maximal final velocity of 1 cm/s along each coordinate.
As a result, the cost functions~\eqref{cfn: minlp lander cost fn},~\eqref{cfn: mpvc lander cost fn} are modified into
\begin{equation*}
    -w_0 m_N - w_1 \sum_{k=0}^{N}\sum_{i=1}^{n_\mathrm{p}}  \delta_{i, k} + w_2\sum_{k=0}^{N-1} \norm{\mu_k}_2^2 + \norm{s_{p, N}}_2^2 + \norm{s_{v, N}}_2^2.
\end{equation*}
the cost weights adopted in these simulations are reported in Table~\ref{tab: sim params}.

\begin{table}
    \centering
    \begin{tabular}{lrl}\toprule
        Parameter & Value & Unit \\ \midrule
        $\bar{r}_0$                             & $[2000, 0, 1500]^\top$ & m \\
        $\bar{v}_0$                             & $1/3.6 \cdot [288, 108, -270]^\top$ & m/s \\
        $\bar{m}_0$                             & $m_\mathrm{wet}$ & kg \\
        $v_\mathrm{max}$                        & $500 / 3.6$ & m/s \\
        $\alpha$                                & $(g_\mathrm{earth}I_\mathrm{sp})^{-1}$ & s/m \\
        $N$                                     & 50 & $\shortminus$ \\
        $t_\mathrm{f}$                          & 75 & s \\
        $(w_0, w_1, w_2)$                          & $(10^{-3}, 10^3, 10^{-3})$ & $\shortminus$ \\
        $\tau_\mathrm{min}$                     & $10^{-3}$ & $\shortminus$ \\
        \bottomrule
    \end{tabular}
    \caption{\ac{OCP} parameters used in the numerical simulations.}
    \label{tab: sim params}
\end{table}

\begin{remark}
    If we disregard divert-feasible regions,~\eqref{op: minlp lander} is a standard \acf{PDG} problem~\cite{Acikmese2007}.
    \ac{PDG} is a fuel- and time-optimal problem for which convex reformulations exist.
    Specifically, the standard PDG problem can be formulated as a \ac{SOCP} and solved to global optimality in the order of milliseconds.
\end{remark}

\subsection{Simulations with simplified divert feasible regions}

We consider a landing scenario with simplified divert feasible polytopic regions.
Specifically, we consider three polytopes only in the position space, i.e., polyhedrons, with a inverted-pyramid shape.\\
The polyhedrons are defined as
\begin{equation}\label{eq: pyramid equation}
    C (r_k - c_i) + d \leq 0, \quad i = 1, 2, 3, k \in \Z_{[0, N]},
\end{equation}
where
\begin{align*}
    C = \begin{bmatrix}
        \cos(\beta) & 0 & -\sin(\beta) \\
        0 & \cos(\beta) &  -\sin(\beta) \\
        -\cos(\beta) & 0 & -\sin(\beta) \\
        0 & -\cos(\beta) & -\sin(\beta)
    \end{bmatrix}, \beta = 70^\circ, d = \begin{bmatrix}
        1 \\ 1 \\ 1 \\ 1
    \end{bmatrix}, \\
    c_1 = (2000, 400, 0), c_2 = (1000, 250, 0), c_3 = (100, -100, 0).
\end{align*}
One can easily transform~\eqref{eq: pyramid equation} into the canonical linear inequality $A r_k + b_i \leq 0, \; i = 1, 2, 3,$ and $k \in \Z_{[0, N]}$ and devise constraints~\eqref{cns: minlp lander polytope} and~\eqref{cns: mpvc lander polytope}.
Thus, the MINLP and MPVC formulations can be passed to their respective solvers.
Table~\ref{tab: sim params} contains the values of the parameters used in simulation.
For Bonmin, we selected the following options that achieved faster runtime \texttt{variable selection: osi-simple}, \texttt{tree search strategy: top-node}, \texttt{node comparison: dynamic}.
For S-B-MIQP, we used the default values set in \texttt{CAMINO}.
For Alg.~\ref{alg:homotopy} we chose a $\tau_\mathrm{min} = 10^{-3}$ which is sufficient to obtain locally optimal solution with binary values for the indicator variables, the other parameters are unchanged.

Computation time and objective value for each solver are reported in Table~\ref{tab: results for simplified regions}.
In the comparison, we also add the solution from the standard \ac{PDG} problem, formulated as a \ac{SOCP} and, here, solved with IPOPT.
For the \ac{PDG} solution we omitted the objective value since it has a different cost function.
Also, the reported runtime could be reduced by choosing a tailored solver or by code generation.
We compute the optimal trajectory for the \ac{PDG} problem in order to have a baseline to compare against for the divert-feasible trajectories.
Among the solutions to the landing with divert-feasible regions, the lowest objective is achieved by the MINLP formulation solved by Bonmin.
The solution exploits the softened terminal constraints on position and velocity.
Moreover, the trajectory stays within one or multiple regions for 27 time steps, and employs roughly 3 kg more of fuel compared to the time- and fuel-optimal trajectory of standard \ac{PDG}.
However, computing the solution with Bonmin took more than 10 minutes.
By using the S-B-MIQP algorithm, we can solve the MINLP in less than half the time required by Bonmin.
We converge to a different local optimum with a slightly higher objective.
Finally, the MPVC formulation can be solved very fast compared to the MINLP one, mostly because it only requires the solution of \acp{NLP}, but, Alg.~\ref{alg:homotopy} converges to a worse local minimum compared to the two \ac{MINLP} solutions.
Indeed, the MPVC trajectory traverses fewer divert-feasible regions, spends more fuel and exploits the slacks on the final position and velocity as the S-B-MIQP solution.
We also notice that the MPVC solution is more influenced by the initial guess compared to the MINLP one.
It may be possible to improve the MPVC solution by a different tuning of the weights in the cost function.
Fine tuning a specific formulation is out of the scope of this work, our main goal is to show that both formulations are solvable and each one has its own advantages and disadvantages.
From these simulations we see that the \ac{MPVC} formulation is faster to solve but requires more tuning to achieve solutions with objective values comparable to the \ac{MINLP} formulation.
Figure~\ref{fig: pyramid minlp traj} shows the position trajectories of the spacecraft for the \ac{PDG} problem, the MINLP formulation solved with S-B-MIQP and the MPVC formulation.
Figure~\ref{fig: pyramid minlp constraint} compares the constraint satisfaction for the three formulations, and in the bottom plot of the MINLP and MPVC formulation we report the activation of the indicator variables for the three different divert-feasible regions.
The thrust profile of the \ac{PDG} problem is bang-bang, which is usual for time optimal problem, while for MINLP and MPVC it is smooth.
Also, the glide slope constraint is not imposed at the final time step because it might interfere with the slacked terminal state constraints, leading to an infeasible problem.
From Figures~\ref{fig: pyramid minlp traj} and~\ref{fig: pyramid minlp constraint} we omitted the MINLP trajectories computed by Bonmin because they are similar to the one of S-B-MIQP.

\begin{table}
    \centering
    \small
    \begin{tabular}{lrrrr}\toprule
                                & MINLP - Bonmin    & MINLP - S-B-MIQP  & MPVC - Alg.~\ref{alg:homotopy}    & SOCP - IPOPT\\ \midrule
        Objective               & -15290.67         & -14565.96         & -7812.72                          &    $\shortminus$     \\
        Final position (m)      & (2.88, -5, 0)     & (5, -5, 0)        & (-5, 5, 0)                        & (0, 0, 0)    \\
        Final velocity (m/s)    & $10^{-3} \cdot (-7, 7, 7)$    & $10^{-3} \cdot (-7, 7, 7)$      & $  10^{-3} \cdot (7, -7, 7)$    & (0, 0, 0) \\
        Final mass (kg)         & 1561.52    & 1561.79      & 1560.79       & 1564.85       \\
        $\sum_i \sum_k \delta_{i,k}$ & 27    & 28           &  25           & $\shortminus$ \\ \midrule
        Runtime (s)             & 631.13    & 263.02        & 7.33          &  0.021        \\  \bottomrule
    \end{tabular}
    \caption{Results for divert-feasible landing with simplified regions, the last column corresponds to the solution of the standard \ac{PDG} problem.}
    \label{tab: results for simplified regions}
\end{table}

\begin{figure}[t!]
    \centering
    \subfigure[SOCP trajectory]{\includegraphics[width=0.9\textwidth]{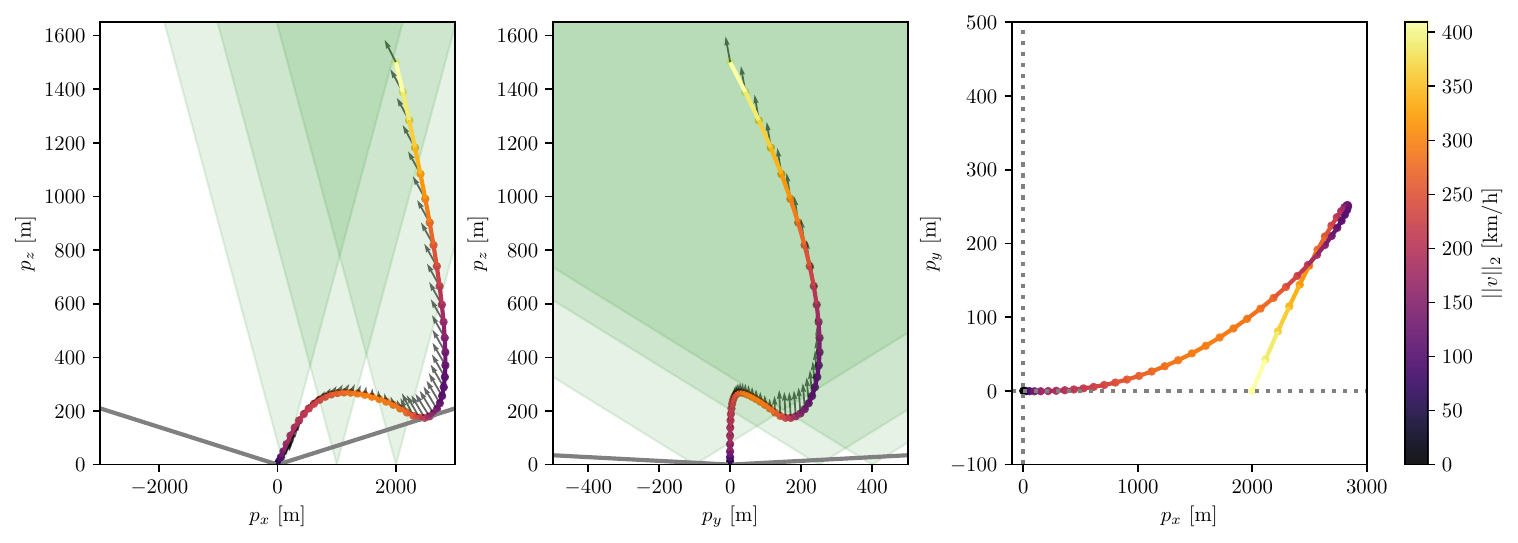}}
    \subfigure[MINLP trajectory]{\includegraphics[width=0.9\textwidth]{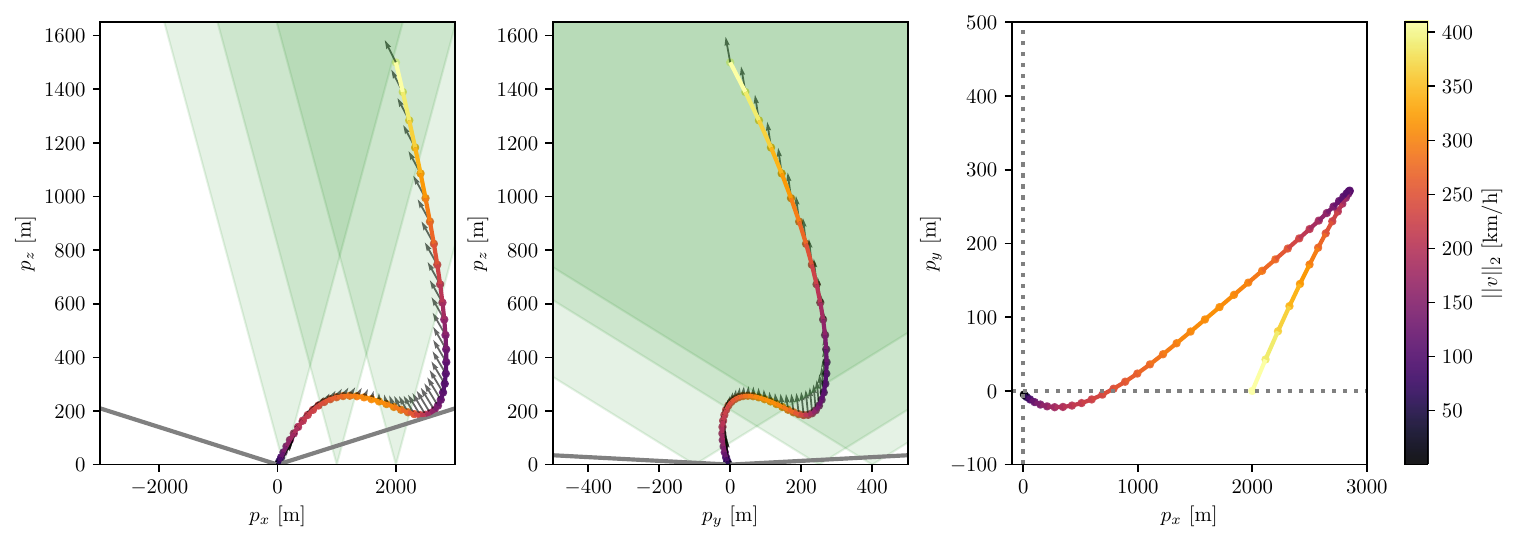}}
    \subfigure[MPVC trajectory]{\includegraphics[width=0.9\textwidth]{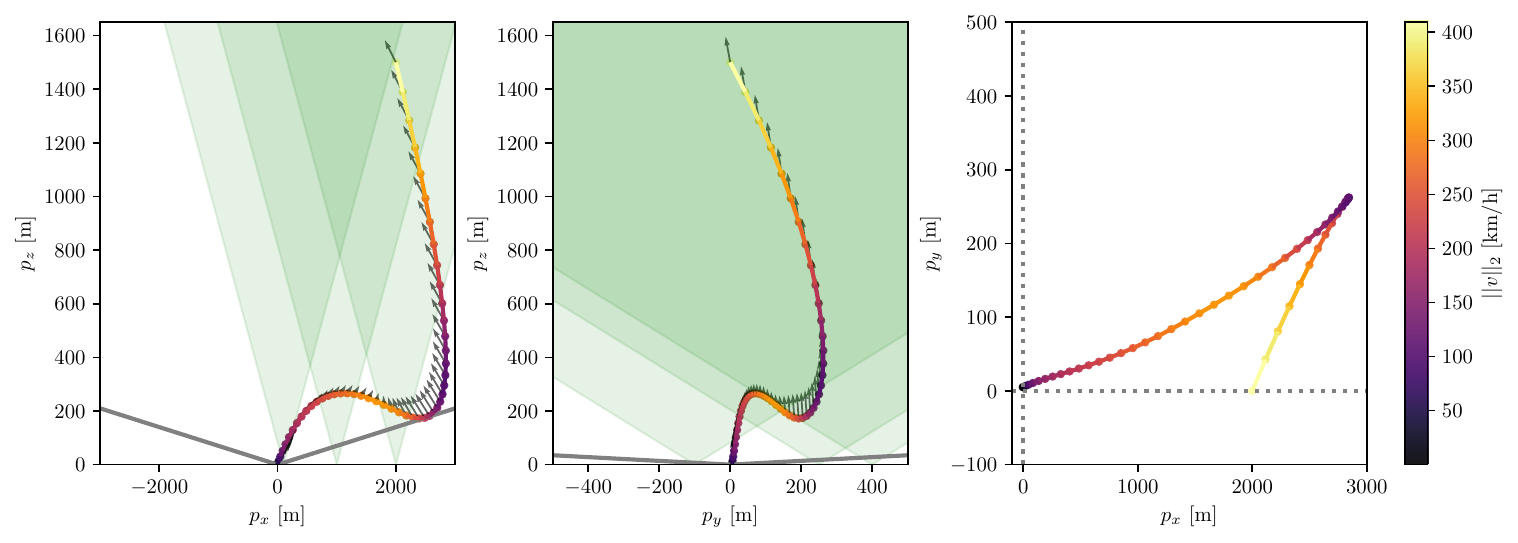}}
    \caption{Spacecraft trajectories for SOCP (top), MINLP~\eqref{op: minlp lander} (middle), and MPVC~\eqref{op: mpvc lander} (bottom).
    For each, the three plots show the trajectories of the position projected onto the $xz$, $yz$, and $xy$ plane, respectively. The color used to depict the trajectory shows the Euclidean norm of the lander's velocity. The direction of the grey arrows represents the pointing angle of the lander in every discretization point, and the length of the arrows represents the Euclidean norm of the thrust.
    The green areas represent the divert-feasible regions.
    The gray solid lines at the bottom of the left and middle plot represent the glide-slope constraint.
    The position is constrained to be above such lines.
    \label{fig: pyramid minlp traj}}
\end{figure}

\begin{figure}[t]
    \centering
    \subfigure[Constraint satisfaction for SOCP]{\includegraphics[width=0.33\textwidth]{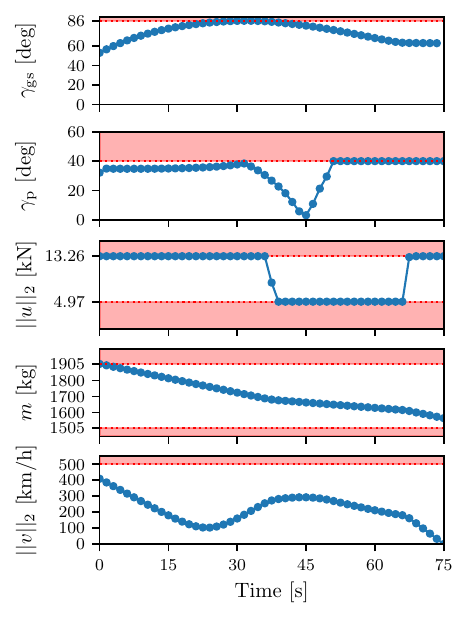}}
    \hfill
    \subfigure[Constraint satisfaction for MINLP~\eqref{op: minlp lander}.]{\includegraphics[width=0.33\textwidth]{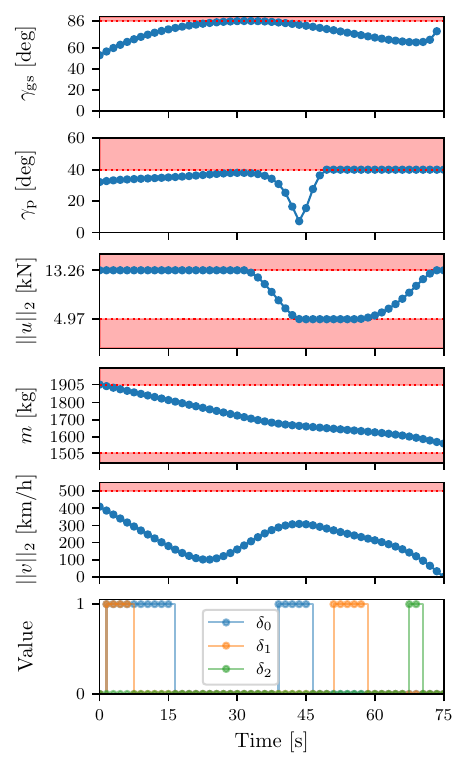}}
    \hfill
    \subfigure[Constraint satisfaction for MPVC~\eqref{op: mpvc lander}.]{\includegraphics[width=0.33\textwidth]{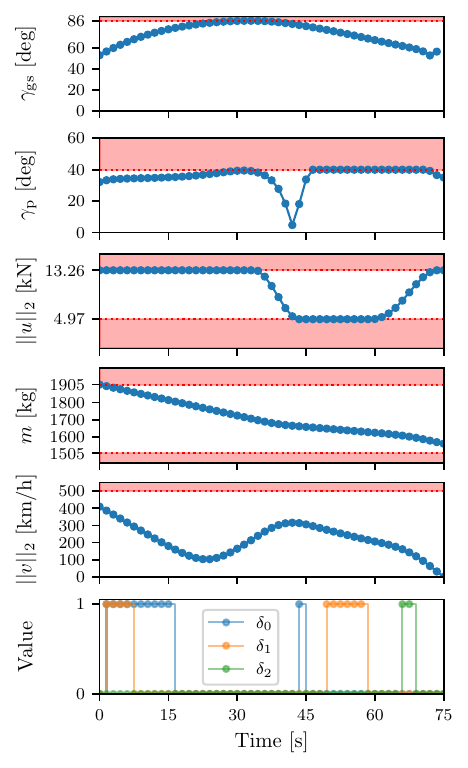}}
    \caption{Comparison of constraint satisfaction for the time- and fuel-optimal problem (no polyhedral constraints) SOCP (left), against MINLP~\eqref{op: minlp lander} (middle), and MPVC~\eqref{op: mpvc lander} (right).
    From top to bottom, the subplots represent the evolution of glide-slope angle, the evolution of the pointing angle, the Euclidean norm of the thrust, the mass depletion, the Euclidean norm of the lander's velocity. The bottom subplot in the middle and right plot represents the trajectory of the indicator variables $\boldsymbol{\delta}$, i.e., the membership in divert-feasible regions.
    \label{fig: pyramid minlp constraint}
    }
\end{figure}

\subsection{Simulations with realistic regions}\label{subsec: simulation with realistic regions}
Now, we want to show that the two approaches can also deal with the realistic divert-feasible regions.
These regions are computed based on reachable set analysis and they are convex polytopes in 6 dimensions, 3D position and 3D velocity.
The polytopes are computed according to the method described in~\cite{Lishkova2024} which yields polytopes in vertex representation.
Here, we transform the vertex representation to the halfspace representation using the \texttt{QHull} library from Python SciPy, with the options \texttt{QJ Qx C-0.00001 C0.001}.
Option \texttt{QJ} is used to increase numerical stability, option \texttt{Qx} allows merging of coplanar facets and it is controlled by the following two options.
\texttt{C-}, ``pre-merging'', allows for merging coplanar facets during the creation of the hull when the centrum of a facet is closer than 0.00001 to the centrum of a neighboring one.
\texttt{C}, ``post-merging'', is similar but applies the merging operation after the hull is constructed.
In this way, we avoid an explosion in the number of halfspaces required to describe the polytopes by slightly approximating the vertex representation.
Since in this work we are interested in showing the computational aspects of the problem, we did not compute tailored divert-feasible regions for the problem at hand.
Instead, we constructed our simulation with a polytope taken from~\cite{Lishkova2024}.
We made three copies and translated them in the position space in order to obtain three scattered regions around the primary landing target located in the origin.
The following three translation vectors expressed in meters $(-100, 0, -10)$, $(100, 100, -10)$, $(250, 0, -10)$ are applied in the position space of the polytope represented in Figure~\ref{fig: polytope projections}, in order to obtain three distinct polytopes.
\begin{table}[t]
    \centering
    \begin{tabular}{lll}
        \toprule
        V-repr. & H-repr. (original) & H-repr. (pruned) \\
        (80, 6) & (3586, 7) & (1672, 7) \\ \bottomrule
    \end{tabular}
    \caption{Dimensions of the realistic divert-feasible regions in different polytopic representations.}
    \label{tab: dimension realistic polytopes}
\end{table}
\begin{figure}[t]
    \centering
    \subfigure[Projection onto 3D position space.]{\includegraphics[trim={1.5cm 0cm 0cm 1cm}, clip, width=0.35\textwidth]{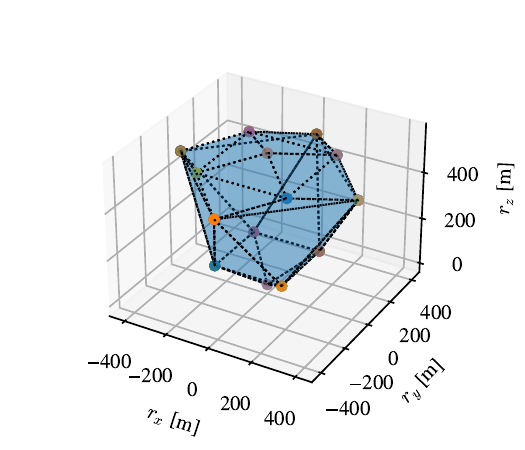}}
    \subfigure[Projection onto XZ-position and Z-velocity space.]{\includegraphics[trim={1.2cm 0cm 0 1cm}, clip, width=0.35\textwidth]{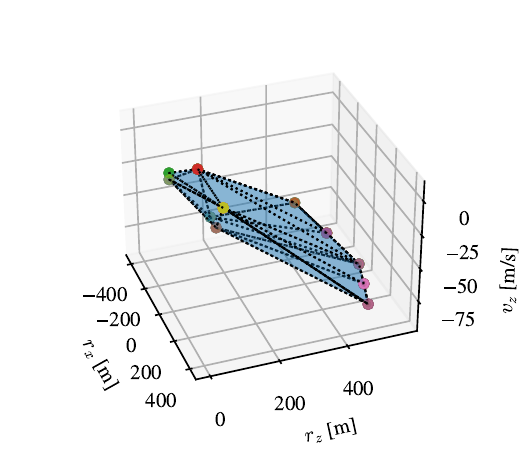}}
    \caption{Projections of the realistic divert-feasible region.~\label{fig: polytope projections}}
\end{figure}
\begin{table}[t!]
    \centering
    \small
    \begin{tabular}{lrr}\toprule
                    &MINLP - S-B-MIQP & MPVC with Alg.~\ref{alg:homotopy} \\ \midrule
        Objective               & -6248.36          & -4022.08  \\
        Final position (m)      & (1.93, -0.24, 0)  & (5, 4.88, 0) \\
        Final velocity (m/s)    & $10^{-3} \cdot (-7, 7, -7)$   & $  10^{-3} \cdot (-7, -7, -7)$ \\
        Final mass (kg)         & 1531.40                       & 1533.28     \\ 
        $\sum_i \sum_k \delta_{i,k}$   & 8      &  11.12     \\ \midrule
        Runtime (s)             & 3622.79           & 19845.02 \\ \bottomrule
    \end{tabular}
    \caption{Results for divert-feasible landing with realistic regions.}\label{tab: results for realistic regions}
\end{table}
Table~\ref{tab: dimension realistic polytopes} contains the dimension of the selected polytopes, Figure~\ref{fig: polytope projections} shows a projection of the polytope onto the 3D position space and onto the $xz$-position and $z$-velocity space.
The latter projection results in a very narrow polytope, and it helps to understand why computing trajectories that belong to these polytopes for as long as possible is not an easy task.

In the simulation with simplified regions we used 75 seconds corresponding to the optimal time found by solving the standard \ac{PDG} problem.
To obtain divert-feasible trajectories considering realistic divert-feasible regions, we need to extend the flight duration.
Hence, we set the final time $t_\mathrm{f} = 110$ s, which is a time budget 50\% higher compared to the trajectory computed by the standard \ac{PDG}.
Also, we increased the available fuel mass by 50 kg, thus $m_\mathrm{wet} = 1955$~kg.
By extending the flight time and increasing the fuel mass, we are implicitly giving more freedom to the optimizer to explore divert-feasible trajectories.
Finally, to keep a reasonable runtime we shorten the horizon to $N=30$.

As mentioned in Remark~\ref{remark: modelling choices}, we do not want to deal with the vertex representation because it requires auxiliary variables to construct the convex combination.
Also, checking the membership of a point into a polytope in vertex representation requires an equality constraint, which is generally hard to satisfy in the context of \ac{MINLP}.
The total number of auxiliary variables for the considered scenario, cf. Table~\ref{tab: dimension realistic polytopes}, would correspond to $n_\mathrm{p} n_\mathrm{c} N = 7200$ where $n_\mathrm{p} =3$ is the number of regions, each one described by $n_\mathrm{c}=80$ vertices, and $N=30$ is the length of the \ac{OCP} horizon.
Conversely, when using the halfspace representation, the number of variables is unchanged, but we end up having $1672 \cdot n_\mathrm{p} N = 150480$ linear inequalities.
When using a MINLP solver based on a decomposition scheme like S-B-MIQP, the internal \ac{MIP} solver usually has powerful presolve routines that can dramatically reduce the number of these inequalities, since many of these are never active.
For the \ac{NLP} solver where presolve routines are not present, the main issue is the memory footprint and the time spent into linear algebra routines.
If the problem fits in the available memory, usually the linear inequalities are not challenging in the step computation of an advanced solver like \texttt{IPOPT}.

Again, we solved both~\eqref{op: minlp lander} and~\eqref{op: mpvc lander}, the results are collected in Table~\ref{tab: results for realistic regions}.
In this case Bonmin could not solve the problem, therefore we omitted it from Table~\ref{tab: results for realistic regions}.
The MINLP formulation is solved by S-B-MIQP within \texttt{CAMINO} which returns a better solution compared to Alg.~\ref{alg:homotopy} applied to the MPVC formulation.
In Alg.~\ref{alg:homotopy}, we could not push $\tau$ low enough to obtain purely binary values for the indicator variables because of numerical problems.
Therefore, we stopped the homotopy loop as soon as we achieved a $\tau < \tau_\mathrm{min} = 0.1$.
We argue that for this instance where we have many constraints describing each divert-feasible region, the MPVC formulation is less efficient because it creates too many nonconvexities which need to be handled by the solver.
So, even if the constraints are relaxed in the homotopy loop, it is hard to find a descent direction that reduces $\tau$ in every constraint.
On the contrary, for the MINLP formulation the constraints describing the regions are linear inequalities.
Specifically, for S-B-MIQP linear inequalities do not create an issue either for the master problems, which are \acp{MIP}, or for the auxiliary \ac{NLP}.

\vspace{-0.2cm}
\section{Conclusion, discussion and outlook}\label{sec: conclusions}
In this work we presented formulations for modeling decision-making problems with performance objectives and constraints expressed through logical expressions.
The approach is particularly relevant in various scenarios, as demonstrated through aerospace case studies.
We further showed how the decision-making formulation can be adapted for solution via Newton-based optimization methods, introducing both \ac{MINLP} and \ac{MPVC} formulations.
Numerical methods for solving these problems were discussed, and the formulations were analyzed and validated through  a \acf{PDG} case study with divert-feasible regions for Mars landing.
The numerical experience showed that the \ac{MPVC} approach is generally faster for computing locally optimal solutions for problems with limited amount of nonconvexities, i.e., of logical implications.
However, the performance of the \ac{MPVC} formulation decreases dramatically when the number of nonconvexities is high, and it might be necessary to stop the homotopy procedure at fractional solutions, as shown in Sec.~\ref{subsec: simulation with realistic regions}.
The \ac{MINLP} formulation solved with the S-B-MIQP algorithm showed to be more reliable than the homotopy method for \ac{MPVC} even in case of numerous nonconvexities.
Also, the \ac{MINLP} formulation is more intuitive as decisions are directly represented by binary variables rather than by structured nonconvexities.
We highlight that the performance of the MINLP approach might change according to the \ac{MIP} solver adopted in the S-B-MIQP algorithm.

\vspace{-0.2cm}
\section*{Acknowledgments}
Andrea Ghezzi has received funding from the European Union's Horizon 2020 research and innovation programme under the Marie Sklodowska-Curie grant agreement ELO-X No. 953348.
Andrea Ghezzi thanks the financial support of MERL during the internship period when this work was developed.
Armin Nurkanovi\'c, Moritz Diehl acknowledge fundings from DFG via Research Unit FOR 2401, project 424107692, 504452366 (SPP 2364), and 525018088, from BMWK via 03EI4057A and 03EN3054B, and from the EU via ELO-X 953348.

\bibliography{lander}

\end{document}